\newtheorem{definition}{Definition}[section]
\newtheorem{theorem}{Theorem}[section]
\newtheorem{proposition}{Proposition}[section]
\newtheorem{remark}{Remark}[section]
\def\punct{}
\newtheoremstyle{dotless}{}{}{\rm}{}{\bf}{\punct}{.5em}{}
\theoremstyle{dotless}
\def\R {\mathbb{R}}
\def\d{{\,\rm d}}
\def \leq{\leqslant}
\def \geq{\geqslant}
\numberwithin{equation}{section}
\journal{arXiv}
\begin{document}
	
\begin{frontmatter}
	
	\title{Small-time approximate controllability for the nonlinear complex Ginzburg-Landau equation with bilinear control}
%		\tnoteref{mytitlenote}}
%	\tnotetext[mytitlenote]{This work is partially supported  by the National Natural Science Foundation of China (NSFC) under grant 12422118.}

%	\author[mysecondaryaddress]{xxx}
%	\ead{xxx}
	
	\medskip
	
	\author[my3address]{Xingwu Zeng}%\corref{mycorrespondingauthor}}
	\ead{xingwuzeng@whu.edu.cn}
	
	\author[my3address]{Can Zhang}%\corref{mycorrespondingauthor}}
%	\cortext[mycorrespondingauthor]{Corresponding author}
\ead{canzhang@whu.edu.cn}

%\address[mysecondaryaddress]{xxx}
\address[my3address]{School of Mathematics and Statistics, Wuhan University, Wuhan 430072, China.}

	\begin{abstract}
		In this paper, we consider the bilinear approximate controllability for  the complex Ginzburg-Landau (CGL) equation with a power-type nonlinearity of any integer degree on a torus of  arbitrary space dimension. Under a saturation hypothesis on the control operator, we show  the small-time global controllability of the CGL equation. The proof is obtained by developing a multiplicative version of a geometric control approach, introduced by Agrachev and Sarychev in \cite{AS05,AS06}.
	\end{abstract}
	
	\begin{keyword}
		Bilinear control \sep complex Ginzburg-Landau equations \sep small-time  approximate controllability.
	\end{keyword}
\end{frontmatter}

%\tableofcontents

%\newpage

\section{Introduction}
\subsection{Model and controllability concepts}
The complex Ginzburg-Landau (CGL) equation is classical in the theory of amplitude equations, and is a simple but important model of convection, flow, and turbulence, etc. It has been studied by many mathematicians and physicists, see \cite{GL,NewellWhitehead,Segel,SteStu}.
We are interested in the following CGL equation  on the torus $\mathbb{T}^d=\mathbb{R}^d / 2 \pi \mathbb{Z}^d$:
\begin{equation}\label{(00)}
\partial_t \psi=V \psi+(1+i \nu) \Delta \psi-(1+i \mu)|\psi|^{2 \sigma} \psi,
\end{equation}where $d,\, \sigma \geq 1$ are arbitrary integers, $i=\sqrt{-1}$, $V\geq0$ and  $\nu,\, \mu\in \R$.
Without loss of generality, \eqref{(00)} has been normalized so that the coefficients of the linear and nonlinear dissipation (damping) terms are unity. $V$ is the coefficient of the linear driving term. $\sigma$ sets the degree of the nonlinearity. $\nu$ and $\mu$ are the coefficients of the linear and nonlinear dispersive terms respectively.

Let $ q \geq 1$ be arbitrary integer, and  $Q: \mathbb{T}^d \rightarrow \mathbb{R}^q$ be a given smooth external field. We consider the following CGL equation with bilinear control on the torus $\mathbb{T}^d$:
\begin{equation} \label{(0.0)}
	\partial_t \psi=V \psi+(1+i \nu) \Delta \psi-(1+i \mu)|\psi|^{2 \sigma} \psi +(1+i R) \langle u(t), Q(x)\rangle \psi .
\end{equation}
The pair $ (u,R)\in L^2_{\text{loc}}(\R^+;\R^{q})\times\R$ plays the role of a control. 
\eqref{(0.0)} is equipped with the initial condition
\begin{equation}\label{(0.2)}
\psi(0, x)=\psi_0(x)
\end{equation}
belonging to a Sobolev space $H^s:=H^s\left(\mathbb{T}^d ; \mathbb{C}\right)$ of order $s>d / 2$, so that the problem is locally well-posed in $H^s$ (see Proposition~\ref{prop1.1} below). 

We next introduce the controllability properties that we are interested in. The first one is the small-time global approximate null-controllability.
\begin{definition}\label{def:control0}
The system \eqref{(0.0)}, \eqref{(0.2)} is small-time approximately null-controllable, if for any $ \varepsilon>0,\, T>0,\,\psi_0 \in H^s$, there exists a control $(u,R) \in L^2([0,T];\R^q)\times\R$ such that
\begin{equation*}
	\left\|\psi(T)\right\|_{H^s}<\varepsilon.
\end{equation*}
\end{definition}
The second one is the small-time global approximate null-controllability of phases, i.e., the small-time global approximate null-controllability to any $H^s$-neighbourhood of any target  of the form $ e^{(1-i\nu) \theta} \psi_0$.
\begin{definition}\label{def:control1}
The system \eqref{(0.0)}, \eqref{(0.2)} is approximately controllable for any $\varepsilon>0,\, \mathcal{T}>0,\, \psi_0 \in H^s$, and $\theta \in C^{\infty}\left(\mathbb{T}^d ; \mathbb{R}\right)$, there is a time $T \in(0, \mathcal{T})$, a control $(u,R) \in C^{\infty}\left([0, T] ; \mathbb{R}^{q}\right)\times\R$, and a unique solution $\psi \in C\left([0, T] ; H^s\right)$ of \eqref{(0.0)}, \eqref{(0.2)} such that
\begin{equation*}
\left\|\psi(T)-e^{(1-i\nu) \theta} \psi_0\right\|_{H^s}<\varepsilon.
\end{equation*}
\end{definition}

\subsection{Notation}
In this paper, we will use the following notation.
\begin{itemize}
	\item $\langle\cdot, \cdot\rangle$ is the Euclidian scalar product in $\mathbb{R}^q$ and $\|\cdot\|$ is the corresponding norm.
	\item $H^s=H^s\left(\mathbb{T}^d ; \mathbb{C}\right)\text{ with }s \geq 0$ and $L^p=L^p\left(\mathbb{T}^d ; \mathbb{C}\right)\text{ with }p \geq 1$, are the standard Sobolev and Lebesgue spaces of functions $f: \mathbb{T}^d \rightarrow \mathbb{C}$ endowed with the norms $\|\cdot\|_s$ and $\|\cdot\|_{L^p}$. 
	The space $ H^s $ is endowed with the norm
	\begin{equation*}
		\|f\|_{s}^2=\sum_{k \in \mathbb{Z}^d}\left(1+|k|^2\right)^s|\widehat{f}(k)|^2,
	\end{equation*}where
	\begin{equation*}
		|k|^2=k_1^2+k_2^2+\ldots+k_d^2,\quad \forall k=\left(k_1,\ldots,k_d\right)\in \mathbb{Z}^d
	\end{equation*}and the Fourier coefficient
	\begin{equation*}
		\widehat{f}(k)=\int_{\mathbb{T}^d} f(x) e^{-i \langle k , x\rangle} \d \mathfrak{m}(x)=\frac{1}{(2 \pi)^d} \int_{[0,2\pi]^d} f(x) e^{-i \langle k , x\rangle} \d x.
	\end{equation*}
	The space $L^2$ is endowed with the scalar product
	$$
	\langle f, g\rangle_{L^2}=\int_{\mathbb{T}^d} f(x) \overline{g(x)} \d \mathfrak{m}(x) =\frac{1}{(2 \pi)^d}\int_{[0,2\pi]^d} f(x) \overline{g(x)} \d x=\frac{1}{(2 \pi)^d} \langle f, g\rangle_{L^2([0,2\pi]^d)} .
	$$
	\item $C^s=C^s\left(\mathbb{T}^d ; \mathbb{C}\right)$ with $s \in \mathbb{N} \cup\{\infty\}$, is the space of $s$-order continuously differentiable function.
	\item Let $X$ be a Banach space. We denote by $B_X(a, r)$ the closed ball of radius $r>0$ centred at $a \in X$.
	\item  We write $J_T$ instead of $[0, T]$ and $J$ instead of $[0,1]$.
	\item $C\left(J_T ; X\right)$ is the space of continuous functions $f: J_T \rightarrow X$ with the norm
	$$
	\|f\|_{C\left(J_T ; X\right)}=\max _{t \in J_T}\|f(t)\|_X .
	$$
	\item $L^p\left(J_T ; X\right)\text{ with }1 \leq p<\infty$, is the space of Borel-measurable functions $f: J_T \rightarrow X$ with
	$$
	\|f\|_{L^p\left(J_T ; X\right)}=\left(\int_0^T\|f(t)\|_X^p \d t\right)^{1 / p}<\infty.
	$$
	\item $s_d$ is the smallest integer strictly greater than $d / 2$.
	\item $\mathbf{1}$ is the function identically equal to 1 on $\mathbb{T}^d$.
\end{itemize}

\subsection{Main results}
The purpose of this paper is to study the CGL equation \eqref{(0.0)} when the driving force $(u,R)$ acts multiplicatively through only few low Fourier modes. 
Let $K \subset \mathbb{Z}_*^d:= \mathbb{Z}^d\backslash\left\{(0, \ldots, 0)\right\}$ be the set of $d$ vectors defined by
\begin{equation}\label{(0.3)}
K=\{(1,0, \ldots, 0),\,(0,1, \ldots, 0),\, \ldots,\,(0,0, \ldots, 1,0),\,(1, \ldots, 1)\}.
\end{equation}
Assume that the field $Q=\left(Q_1, \ldots, Q_q\right)$ satisfies
\begin{equation}\label{(0.4)}
\{\mathbf{1},\, \sin \langle x, k\rangle,\, \cos \langle x, k\rangle\mid k \in K\} \subset \operatorname{span}\left\{Q_j\mid j=1, \ldots, q\right\}.
\end{equation}

The main results of this paper are as following.
\begin{theorem}\label{thm0}
	Assume that the condition \eqref{(0.4)} is satisfied. Let $s \geq s_d$ be an integer. The system \eqref{(0.0)}, \eqref{(0.2)} is small-time approximately null-controllable for any $R\in\R$ in the sense of Definition~\ref{def:control0}. 
\end{theorem}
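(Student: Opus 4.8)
\medskip\noindent\emph{Proof idea.} The plan is to exploit the fact that the saturation condition~\eqref{(0.4)} contains the constant mode $\mathbf 1$, so that the bilinear control can add to the equation an arbitrarily large, spatially uniform, purely dissipative potential. Combined with the dissipativity of the parabolic operator $(1+i\nu)\Delta$ and of the nonlinearity $-(1+i\mu)|\psi|^{2\sigma}\psi$, this forces the $H^s$-norm of the solution to decay as fast as we wish on the prescribed interval $[0,T]$. For this particular statement the geometric, Lie-bracket type machinery is not needed: a single constant control does the job.

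Concretely, I would fix $\psi_0\in H^s$, $\varepsilon>0$, $T>0$ and $R\in\R$, and choose $a\in\R^q$ with $\langle a,Q(\cdot)\rangle\equiv\mathbf 1$, which exists by~\eqref{(0.4)}. For a parameter $\lambda>0$ to be fixed later, take the constant-in-time control $u(t)\equiv-\lambda a\in C^\infty([0,T];\R^q)$, so that $\langle u(t),Q(x)\rangle\equiv-\lambda$ and~\eqref{(0.0)} becomes
\begin{equation*}
\partial_t\psi=(V-\lambda)\,\psi+(1+i\nu)\Delta\psi-(1+i\mu)|\psi|^{2\sigma}\psi-i\lambda R\,\psi .
\end{equation*}
The last term is a spatially homogeneous phase rotation; its $H^s$ real part vanishes, $\operatorname{Re}\langle -i\lambda R\,\psi,\psi\rangle_s=0$, so it will not enter the energy identity below and the value of $R$ is irrelevant.

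Then I would run an $H^s$ energy estimate for $\tfrac12\tfrac{d}{dt}\|\psi\|_s^2=\operatorname{Re}\langle\partial_t\psi,\psi\rangle_s$. By the Fourier characterization of $\|\cdot\|_s$, the term $(1+i\nu)\Delta\psi$ contributes a nonpositive quantity, which we discard; the term $(V-\lambda)\psi$ contributes $(V-\lambda)\|\psi\|_s^2$; and, since $s\geq s_d>d/2$ so that $H^s$ is an algebra with $H^s\hookrightarrow L^\infty$, a tame (Moser--Kato--Ponce) product estimate applied to the polynomial nonlinearity gives $\bigl\|\,|\psi|^{2\sigma}\psi\,\bigr\|_s\leq C_s\|\psi\|_{L^\infty}^{2\sigma}\|\psi\|_s\leq C_s\|\psi\|_s^{2\sigma+1}$. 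With $y(t):=\|\psi(t)\|_s^2$ this yields, on the maximal interval of existence,
\begin{equation*}
\dot y\leq 2(V-\lambda)\,y+C_s'\,y^{\sigma+1}=\bigl(2(V-\lambda)+C_s'\,y^{\sigma}\bigr)\,y .
\end{equation*}
If $\lambda>V+\tfrac12 C_s'\,\|\psi_0\|_s^{2\sigma}$ then $\dot y(0)<0$, and a standard continuity argument gives $y(t)\leq y(0)$ as long as the solution exists; hence $\|\psi(t)\|_s$ stays bounded, so by Proposition~\ref{prop1.1} and the usual continuation criterion the solution extends to all of $[0,T]$, on which $\dot y\leq\bigl(2(V-\lambda)+C_s'\|\psi_0\|_s^{2\sigma}\bigr)y$, whence
\begin{equation*}
\|\psi(T)\|_s^2=y(T)\leq\|\psi_0\|_s^2\,\exp\!\Bigl(\bigl(2(V-\lambda)+C_s'\|\psi_0\|_s^{2\sigma}\bigr)T\Bigr).
\end{equation*}
The exponent tends to $-\infty$ as $\lambda\to\infty$, so taking $\lambda$ large enough (depending only on $s,d,\sigma,\mu,V,T,\varepsilon$ and $\|\psi_0\|_s$) makes the right-hand side $<\varepsilon^2$, which is the assertion of Theorem~\ref{thm0}.

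The only technical ingredients are the tame $H^s$ product estimate for the polynomial nonlinearity, the routine justification of the energy identity for merely $H^s$ data (via the parabolic smoothing underlying Proposition~\ref{prop1.1}), and the bootstrap to a global solution from the a priori bound $y\leq y(0)$; none of these poses a genuine obstacle for Theorem~\ref{thm0}. The real difficulties of the paper — and the place where the full force of the saturation hypothesis~\eqref{(0.4)} and of the Agrachev--Sarychev-type scheme is used — lie instead in the controllability of phases, i.e.\ in reaching the non-trivial targets $e^{(1-i\nu)\theta}\psi_0$ of Definition~\ref{def:control1}.
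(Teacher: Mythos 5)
Your argument is correct, and it reaches Theorem~\ref{thm0} by a genuinely more direct route than the paper. The paper deduces Theorem~\ref{thm0} from Theorem~\ref{thm00}, whose proof uses the small-time asymptotic limit of Proposition~\ref{prop1.2} with $\varphi=0$: a constant control scaled by $\delta^{-1}$ and applied on a vanishing interval $[0,\delta]$ drives $\psi_0$ to within $\varepsilon/2$ of $e^{(r_1+ir_2)c}\psi_0$, whose norm is made small by choosing $c$ with $e^{cr_1}$ tiny (the imaginary part $r_2=R$ only produces a phase, hence is irrelevant); the remaining time $[\delta,T]$ is covered by the zero control together with the stability estimate of Proposition~\ref{prop1.1}(ii). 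You instead apply the unscaled constant control $u\equiv-\lambda a$ on all of $[0,T]$ and close a direct $H^s$ energy/Gronwall estimate, observing that $\operatorname{Re}\langle -i\lambda R\psi,\psi\rangle_s=0$ kills the $R$-dependence and that $\operatorname{Re}\langle(1+i\nu)\Delta\psi,\psi\rangle_s\leq 0$. The underlying mechanism is identical in both proofs (the mode $\mathbf 1$ supplies an arbitrarily strong uniform damping), but your version is self-contained and elementary, at the price of redoing an energy estimate that is essentially the $\varphi=0$, $w\equiv e^{(r_1+ir_2)ct}\psi_0^\delta$ special case of the computation inside the proof of Proposition~\ref{prop1.2}, whereas the paper's version reuses that proposition and therefore needs no new analysis. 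Two points you correctly flag but should carry out if this were written in full: the energy identity must be justified for data that are merely in $H^s$ (regularize, e.g.\ take $\psi_0\in H^{2s+2}$, and pass to the limit using Proposition~\ref{prop1.1}(ii), exactly as the paper does inside the proof of Proposition~\ref{prop1.2}), and the a priori bound $y(t)\leq y(0)$ must be combined with the blow-up alternative of Proposition~\ref{prop1.1} to conclude that the solution exists on all of $[0,T]$. Neither is an obstacle.
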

\begin{theorem}\label{thma}
	Assume that the condition \eqref{(0.4)} is satisfied. Let $s \geq s_d$ be an integer. The system \eqref{(0.0)}, \eqref{(0.2)} is approximately controllable in the sense of Definition~\ref{def:control1}.
\end{theorem}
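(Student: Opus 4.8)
The plan is to deduce Theorem \ref{thma} from Theorem \ref{thm0} by a two-stage argument in the spirit of the Agrachev–Sarychev geometric control method, adapted to the multiplicative (bilinear) setting. The first stage uses the small-time global approximate null-controllability (Theorem \ref{thm0}) to steer $\psi_0$ into an arbitrarily small $H^s$-ball around the origin in arbitrarily short time. The second stage steers that small state to an $H^s$-neighbourhood of the target $e^{(1-i\nu)\theta}\psi_0$, again in short time. The key observation that makes the second stage feasible is that the \emph{origin is a fixed point} of the uncontrolled dynamics in a strong sense: along the multiplicative action $\psi\mapsto e^{(1-i\nu)\theta}\psi$, if we could realize such a transformation as (an approximation of) the flow of the control system, then applying it to a state $\psi_1$ close to $0$ would give something close to $e^{(1-i\nu)\theta}\cdot 0 = 0$, which is \emph{not} yet the target. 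So instead the correct order is: first bring a neighbourhood of $\psi_0$ itself under the multiplicative transformation, then kill the remainder. Concretely, I would reverse time conceptually and argue as follows.

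First I would establish a \emph{small-time approximate reachability of multiplicative phase factors}: for any $\theta\in C^\infty(\mathbb{T}^d;\R)$, any $\varepsilon>0$, any $\mathcal T>0$, and any $\phi_0\in H^s$, there is $T\in(0,\mathcal T)$ and a smooth control $(u,R)$ driving the solution of \eqref{(0.0)}--\eqref{(0.2)} from $\phi_0$ to within $\varepsilon$ of $e^{(1-i\nu)\theta}\phi_0$ in $H^s$. This is the heart of the matter. The mechanism: choosing $R=\nu$ (or exploiting the freedom in $R$) and large-amplitude, short-duration controls $u$ concentrated on the modes in $K$ saturating \eqref{(0.4)}, the bilinear term $(1+i\nu)\langle u(t),Q(x)\rangle\psi$ dominates on a short time interval, and one rescales time $t = s/N$, $u = N \tilde u(s)$ so that in the limit $N\to\infty$ the diffusion, driving, and nonlinear terms become negligible (they contribute $O(1/N)$) and the solution is approximated by the flow of $\partial_s \phi = (1+i\nu)\langle \tilde u(s),Q(x)\rangle\phi$, whose time-one map is multiplication by $\exp\big((1+i\nu)\int_0^1\langle \tilde u(s),Q\rangle\,ds\big)$. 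Since $\{\mathbf 1,\sin\langle x,k\rangle,\cos\langle x,k\rangle : k\in K\}$ is contained in $\operatorname{span}\{Q_j\}$ and this finite set \emph{generates} (via the saturation / iterated-bracket argument already invoked for Theorem \ref{thm0}) a set of functions dense in, say, $C^\infty(\mathbb{T}^d;\R)$ in the relevant topology — here one needs that finite linear combinations and the saturation procedure reach arbitrary smooth real functions — one can choose $\int_0^1\langle \tilde u(s),Q\rangle\,ds$ to approximate any prescribed smooth real function, in particular $\theta$. Writing the phase factor as $e^{(1+i\nu)\theta}$ versus the target $e^{(1-i\nu)\theta}$: the sign of $\nu$ is handled by also allowing $R$ to vary, since the realized factor is $\exp\big((1+iR)\int\langle\tilde u,Q\rangle\big)$, and the real part $\theta$ together with an imaginary part that can be made anything; choosing the accumulated control so that its real part is $\theta$ and letting $R=-\nu$ yields exactly $e^{(1-i\nu)\theta}$. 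A continuity/well-posedness estimate from Proposition \ref{prop1.1} then upgrades this limiting statement to an $\varepsilon$-approximation for $N$ large, uniformly for $\phi_0$ in bounded sets.

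Second, I would assemble the full argument. Given $\varepsilon, \mathcal T, \psi_0, \theta$: by the continuous dependence on initial data (Proposition \ref{prop1.1}), pick $\delta>0$ so small that any two solutions with initial data $\delta$-close in $H^s$ stay $\varepsilon/3$-close on a time interval of length $\le \mathcal T$, and also so small that the phase-factor step applied to data within $\delta$ of $\psi_0$ lands within $\varepsilon/3$ of $e^{(1-i\nu)\theta}\psi_0$. Run the phase-factor construction of the previous paragraph \emph{with $\phi_0=\psi_0$} on a time interval of length $\mathcal T/2$, reaching a state $\psi_1$ with $\|\psi_1 - e^{(1-i\nu)\theta}\psi_0\|_{H^s}<\varepsilon$. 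Wait — this already gives the conclusion directly, so in fact only the phase-factor step is needed and Theorem \ref{thm0} is not used for the proof of Theorem \ref{thma}; rather, both theorems share the underlying large-control approximation lemma, and Theorem \ref{thm0} is the special case $\theta\equiv 0$ combined with the damping of the nonlinear and linear-driving terms over the remaining time. To be safe I would present it as: the phase-factor lemma gives Theorem \ref{thma}, and then Theorem \ref{thm0} follows by taking $\theta$ with $\psi_0$ arranged (via an additional short null-controllability step using the dissipativity $+V\psi + (1+i\nu)\Delta\psi$ which, with $u=0$, contracts $H^s$-norm for $V$ small or after a real phase shift making the driving term dissipative) to have small norm.

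The main obstacle I expect is the rigorous justification of the convergence, as $N\to\infty$, of the true solution of \eqref{(0.0)} under the rescaled large control to the solution of the pure bilinear ODE $\partial_s\phi=(1+iR)\langle\tilde u(s),Q\rangle\phi$ in the $H^s$-topology, uniformly on bounded sets of initial data — the nonlinear term $|\psi|^{2\sigma}\psi$ is only locally Lipschitz on $H^s$ (using $s>d/2$ and the algebra property), so one must first obtain an a priori $H^s$-bound on the solution that is uniform in $N$ on the short interval, which is plausible because the bilinear flow preserves $\|\psi\|_{H^s}$ up to a constant depending only on $\|\theta\|_{C^s}$, and then treat the remaining terms by Gronwall with a constant $O(1/N)$. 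A secondary subtlety is the exact identification of which real functions the saturation set $\{\mathbf 1,\sin\langle x,k\rangle,\cos\langle x,k\rangle:k\in K\}$ generates: one needs density of the generated space in the $C^s$-topology (not just $L^2$), which should follow from the same algebraic bracket computations that underlie Theorem \ref{thm0}, since products and sums of these trigonometric functions span a subalgebra of $C^\infty(\mathbb{T}^d)$ that, by Stone–Weierstrass-type reasoning together with the iterated saturation, is dense.
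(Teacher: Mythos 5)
Your proposal contains a genuine gap at the step you yourself identify as ``the heart of the matter.'' You claim that by rescaling $t=s/N$, $u=N\tilde u$, the solution is approximated by the flow of $\partial_s\phi=(1+iR)\langle\tilde u(s),Q\rangle\phi$, whose time-one map is multiplication by $\exp\bigl((1+iR)\int_0^1\langle\tilde u(s),Q\rangle\,\mathrm{d}s\bigr)$, and that the accumulated phase $\int_0^1\langle\tilde u(s),Q\rangle\,\mathrm{d}s$ ``can be chosen to approximate any prescribed smooth real function.'' This is false: that integral always lies in $\operatorname{span}\{Q_1,\dots,Q_q\}$, a \emph{finite-dimensional} space (for the field in \eqref{(0.4)}, a space of trigonometric polynomials of dimension at most $2|K|+1$), which is never dense in $C^\infty(\mathbb{T}^d;\R)$. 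No ``iterated-bracket'' or Stone--Weierstrass argument rescues this within the limiting ODE, because multiplication operators commute — the pure bilinear flow generates no new directions, and the control enters only linearly through $\langle u,Q\rangle$, so you cannot form products of the $Q_j$ by choosing $u$. Your construction therefore only reaches targets $e^{(1-i\nu)\theta}\psi_0$ with $\theta\in H_0=\operatorname{span}\{Q_j\}$, which is Step~1 (the base case $N=0$) of the paper's induction, not the full theorem.

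The missing mechanism is the one encoded in Proposition~\ref{prop1.2} and the saturation Definition~\ref{def2.1}: new directions are generated by conjugating the flow with rapidly oscillating multiplicative factors $e^{\pm(a+ib)\delta^{-1/2}\varphi}$, whose $\delta^{-1/2}$ amplitude interacts with the second-order operator $(1+i\nu)\Delta$ to produce, in the limit $\delta\to0^+$, the quadratic term $\mathbb{B}(\varphi)=\sum_j(\partial_{x_j}\varphi)^2$ in the exponent. It is $\mathbb{B}$ — not the span of the $Q_j$ — that creates new Fourier frequencies (e.g.\ $\mathbb{B}(\sin\langle x,k\rangle+\sin\langle x,l\rangle)$ contains $\cos\langle x,k\pm l\rangle$ terms), and the paper's proof of Theorem~\ref{thm2.2} runs an induction over the increasing spaces $H_N=F(H_{N-1})$, concatenating three controls at each stage (steer to $e^{-(a+ib)\delta^{-1/2}\tilde\theta_1}\psi_0$ by the inductive hypothesis, run the free dynamics for time $\delta$, then apply the phase $e^{+(a+ib)\delta^{-1/2}\tilde\theta_1}$ again by the inductive hypothesis), with density in $C^r$ supplied by Proposition~\ref{prop2.6}. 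Your proposal contains no analogue of this conjugation/induction step, so the claimed density of reachable phases is unsupported and the argument does not close. (Your secondary remarks — the back-and-forth about whether Theorem~\ref{thm0} is needed, and the observation that the nonlinearity must be controlled uniformly in $N$ by a Gronwall argument — are reasonable but do not affect this core defect.)
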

Several remarks are given in order.
\begin{remark}
	We emphasize that the approximate null-controllability in Theorem~\ref{thm0} does not impose any condition on the scalar control $R$,  i.e., the choice of $R\in\R$ is arbitrary.
\end{remark}
\begin{remark}
A more general formulation of the result in Theorem~\ref{thma} is given in Theorem~\ref{thm2.2}, where the controllability is proved under an abstract saturation condition for the field $Q$ (see the condition~{\hyperlink{H1}{\textbf{($\mathscr{P}$)}}}).
We will see from the proof of Theorem~\ref{thm2.2} that the scalar control $  R=-\nu $.
Note that the time $T$ may depend on the initial condition $\psi_0$, the target $e^{(1-i\nu) \theta} \psi_0$, and the parameters in this equation. 
\end{remark}
\begin{remark}
	As the bilinear approximate null-controllability has been derived in Theorem~\ref{thm0}, one may naturally ask whether the bilinear exact null-controllability can be achieved. Unfortunately, the bilinear exact null-controllability seems to be impossible. For the sake of simplicity, let us consider the solution $\psi$ of the following heat equation with the bilinear control $u\in L^\infty(\R^d\times(0,T))$:
	\begin{equation*}
		\left\{\begin{aligned}
			\partial_t \psi-\Delta \psi-u \psi&=0 \quad &&\text { in } \R^d \times(0, T), \\
			\psi\mid_{t=0}&=\psi_0 \quad &&\text { in } \R^d,
		\end{aligned}\right.
	\end{equation*}where the initial datum $\psi_0\not\equiv0$. Then, if $\psi(x,T)\equiv0$ in $\R^d$ for some $u\in L^\infty(\R^d\times(0,T))$ and $T>0$, the well-known backward uniqueness for parabolic equations (see \cite{LuisCPAM2003}) implies that $\psi$ vanishes identically in $\R^d\times[0,T]$. This contradicts $\psi_0\not\equiv0$. The conclusion is that, if $\psi_0\not\equiv0$, then for any $u\in L^\infty(\R^d\times(0,T))$ and $T>0$, $\psi(x,T)\not\equiv0$ in $\R^d$.
\end{remark}
\begin{remark}
	 The small-time controllability of phases is proved in Theorem~\ref{thma}, while a stronger controllability result can be considered: the small-time $L^2$-approximate controllability. It refers to the possibility, for any $\varepsilon>0$ and every $\psi_0,\, \psi_1\in L^2$ with $\|\psi_0\|_{L^2}=\|\psi_1\|_{L^2}=1$, there exists a time $T\in[0,\varepsilon]$, a global phase $\theta\in[0,2\pi)$ and controls $u: [0, T] \rightarrow \mathbb{R}^{q},\,R\in\R$ such that the unique solution $\psi \in C\left([0, T] ; H^s\right)$ of \eqref{(0.0)}, \eqref{(0.2)} satisfies\begin{equation*}
		\|	\psi(T;\psi_0,u,R)-e^{i\theta}\psi_1\|_{L^2}<\varepsilon.
	\end{equation*}
	We refer the interested readers to \cite{KB_EP_2}.
\end{remark}

\subsection{Methodology}
The main idea is motivated by \cite{Duca_EMS}. To draw a picture of the proofs of Theorems~\ref{thm0} and \ref{thma}, let us first turn to the following Cauchy problem: 
\begin{equation}\label{(0.1)}
	\partial_t \psi=V \psi+(1+i \nu) \Delta \psi-(1+i \mu)|\psi|^{2 \sigma} \psi +(r_1+i r_2) \langle u(t), Q(x)\rangle \psi ,
\end{equation}
which is equipped with the same initial condition \eqref{(0.2)}.
We denote by $\mathcal{R}_t\left(\psi_0, (u,r_1,r_2)\right)$ solution of the system \eqref{(0.1)}, \eqref{(0.2)} defined up to some maximal time. We denote by $\mathbb{B}(\varphi)(x)=\sum_{j=1}^d\left(\partial_{x_j} \varphi(x)\right)^2$. 
A central role in the proof is played by the limit
\begin{equation}\label{(0.5)}
	e^{ (a+ib)\delta^{-1 / 2} \varphi} \mathcal{R}_\delta\left(e^{-(a+ib) \delta^{-1 / 2} \varphi} \psi_0, (\delta^{-1} u,r_1,r_2)\right) \rightarrow e^{(r_1+i r_2)\left( \mathbb{B}(\varphi)+\langle u, Q\rangle\right)} \psi_0  \quad \text { in } H^s \text { as } \delta \rightarrow 0^{+},
\end{equation}
which holds for any $\psi_0 \in H^s$, non-negative function $\varphi \in C^{\infty}\left(\mathbb{T}^d ; \mathbb{R}\right)$,  constants $(u,r_1,r_2) \in \mathbb{R}^q\times\R\times\R$, and $ a,b\in\R $ satisfying $ r_1+ir_2=(1+i\nu)(a+ib)^2. $ The limit \eqref{(0.5)} specifies the asymptotic behavior of the solution of the CGL equation in small time under appropriately scaled large control and rapidly oscillating initial condition. Theorem~\ref{thm0} is derived directly from \eqref{(0.5)}. 

In order to explain the main idea, we consider the 1-D case, $d=1$. In this case, we could choose the field $Q=\left(\mathbf{1},\,\sin x,\,\cos x\right)$ such that assumption \eqref{(0.4)} satisfied, and \eqref{(0.5)} becomes 
\begin{equation}\label{(0.5)1d}
	e^{ (a+ib)\delta^{-1 / 2} \varphi} \mathcal{R}_\delta\left(e^{-(a+ib) \delta^{-1 / 2} \varphi} \psi_0, (\delta^{-1} u,r_1,r_2)\right) \rightarrow e^{(r_1+i r_2)\left( \varphi_x^2+\langle u, Q\rangle\right)} \psi_0  \quad \text { in } H^s \text { as } \delta \rightarrow 0^{+}.
\end{equation}
Letting $r_1=1,r_2=-\nu$, applying this limit with $\varphi=0$ and using the assumption \eqref{(0.4)}, we see that \eqref{(0.1)} can be controlled in small time from any initial point $\psi_0\in H^s$ to an arbitrary neighbourhood of $e^{(1-i\nu) \theta} \psi_0$ for any $\theta$ in the vector space
$$
H_0=\operatorname{span}\{\mathbf{1},\, \sin  x,\, \cos  x\}.
$$
Applying again the limit \eqref{(0.5)1d} with non-negative functions $\varphi=\theta_j \in H_0,\, j=0, \ldots, n$, we add more directions in $\theta$. Namely, the system can be steered from $\psi_0$ close to $e^{(r_1+ir_2) \theta} \psi_0$, where $\theta$ now belongs to a larger vector space $H_1$ whose elements are of the form
$$
\theta_0+\sum_{j=1}^n (\theta_j^\prime)^2.
$$
We iterate this argument and construct an increasing sequence $\left\{H_j\right\}$ of subspaces such that the equation can be approximately controlled to any target $e^{(r_1+ir_2) \theta} \psi_0$ with any $\theta \in H_j$ and $j \geq 1$. Since the union $\bigcup_{j=1}^{\infty} H_j$ is dense in $C^k\left(\mathbb{T}^d ; \mathbb{R}\right)$ for any $k \geq 1$, this concludes the proof of approximate controllability of problem~\eqref{(0.1)} and \eqref{(0.2)} with $r_1=1,\, r_2=-\nu$ and $u\in C^{\infty}\left([0, T] ; \mathbb{R}^{q}\right)$, thereby establishing Theorem~\ref{thma}.

\subsection{Related literature}
The controllability of the CGL equation has been widely studied via additive controls, see \cite{DFLZ_23,Fu_07,FuLiu_17,RZ_09}. Most of the controllability results are proved by establishing Carleman estimates.
However, to the best of our knowledge, the controllability of the CGL equation via bilinear control has not been discussed.

The present paper is the first one to deal with the bilinear controllability for the CGL equation by using the Agrachev-Sarychev type approach. Agrachev and Sarychev \cite{AS05,AS06,AS08} first developed the saturating geometric control approach when studying the approximate controllability of the 2D Navie-Stokes  and Euler systems by finite-dimensional forces. Later, their approach has been further extended to different equations in the case of additive controls, see \cite{Nersesyan21,CPAM2025,JMPA2024,Sarychev12,Shirikyan06,Shirikyan18}. Recently, this approach has been implemented for bilinear small-time  controllability of the Schr\"odinger equation in \cite{KB_EP_2,Duca_EMS},  the heat equation in \cite{Duca_JMPA}, and the Burgers equation in \cite{Duca_Burgers}. The intersted readers can find additional results on small-time controllability of PDEs through bilinear control by similar methods in \cite{CP_Auto,CoronXZ_JDE,DucaNersesyan_SICON,Duca_SICON,Pozzoli_JDE}.

\subsection{Organization}
The rest of this paper is organised as follows. In Section~\ref{sec:Preliminaries}, we establish the local well-posedness, some stability properties and a key asymptotic property  of the CGL equation. In Section~\ref{sec:Approximate controllability}, we formulate more general versions of Theorems~\ref{thm0} and \ref{thma}  and give their proofs. At the end of Section~\ref{sec:Approximate controllability}, we give an example of a saturating subspace and prove Theorems~\ref{thm0} and \ref{thma}. Section~\ref{sec:conclud} gives some concluding comments. Appendix is devoted to the proof of Proposition~\ref{prop1.1}.

\subsection{Acknowledgements}
This work was partially supported  by the National Natural Science Foundation of China (NSFC) grant 12422118.

\section{Two auxiliary propositions}\label{sec:Preliminaries}
\subsection{Local well-posedness}
In the following, we consider the CGL equation \eqref{(0.1)}, where $u$ is a  $\mathbb{R}^q$-valued function and $Q: \mathbb{T}^d \rightarrow \mathbb{R}^q$ are arbitrary smooth functions. 
We shall always assume that the parameters $d,\, \sigma\geq 1$, and $V\geq0,\, \nu,\, \mu,\, r_1,\, r_2\in \R$ are arbitrary. We next present two propositions that will be used in the proofs of our main results. 
The first proposition is about the local well-posedness and stability of the CGL equation in suitable Sobolev spaces. The proof is standard, hence we give it in Appenix.
\begin{proposition}\label{prop1.1}
For any $s>d / 2,\, \tilde{\psi}_0 \in H^s$, and $\tilde{u} \in L_{\text{loc}}^2\left(\mathbb{R}_{+} ; \mathbb{R}^q\right)$, there is a maximal time $\tilde{T}=\tilde{T}\left(\tilde{\psi}_0, \tilde{u}\right)>0$ and a unique solution $\tilde{\psi}$ of the problem \eqref{(0.1)}, \eqref{(0.2)} with $\left(\psi_0, u\right)=\left(\tilde{\psi}_0, \tilde{u}\right)$, whose restriction to the interval $J_T$ belongs to $C\left(J_T ; H^s\right)$ for any $T<\tilde{T}$. If $\tilde{T}<\infty$, then $\|\tilde{\psi}(t)\|_s \rightarrow \infty$ as $t \rightarrow \tilde{T}^{-}$. Furthermore, for any $T<\tilde{T}$, there are constants $\delta=\delta(T, \Lambda)>0$ and $C=C(T, \Lambda)>0$, where
$$
\Lambda=\|\tilde{\psi}\|_{C\left(J_T ; H^s\right)}+\|\tilde{u}\|_{L^2\left(J_T ; \mathbb{R}^q\right)},
$$
such that the following properties hold:\\
(i) For any $\psi_0 \in H^s$ and $u \in L^2\left(J_T ; \mathbb{R}^q\right)$ satisfying
\begin{equation}\label{stability}
 \|\psi_0-\tilde{\psi}_0 \|_s+\|u-\tilde{u}\|_{L^2\left(J_T ; \mathbb{R}^q\right)}<\delta,
\end{equation}
the problem \eqref{(0.1)}, \eqref{(0.2)} has a unique solution $\psi \in C\left(J_T ; H^s\right)$.\\
(ii) Let $\mathcal{R}$ be the resolving operator for \eqref{(0.1)}, i.e., the mapping taking a couple $\left(\psi_0, u\right)$ satisfying \eqref{stability} to the solution $\psi$. Then there exists a constant $C>0$ such that
\begin{equation}\label{continuity}
\left\|\mathcal{R}\left(\psi_0, u\right)-\mathcal{R}\left(\tilde{\psi}_0, \tilde{u}\right)\right\|_{C\left(J_T ; H^s\right)} \leq C\left( \|\psi_0-\tilde{\psi}_0 \|_s+\|u-\tilde{u}\|_{L^2\left(J_T ; \mathbb{R}^q\right)}\right) .
\end{equation}
\end{proposition}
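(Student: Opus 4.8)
The plan is to establish Proposition~\ref{prop1.1} by a standard fixed-point argument for the mild formulation of \eqref{(0.1)}, using the smoothing properties of the semigroup $e^{t(1+i\nu)\Delta}$ on $H^s$ together with the fact that $H^s(\mathbb{T}^d;\mathbb{C})$ is a Banach algebra when $s > d/2$. First I would rewrite \eqref{(0.1)}, \eqref{(0.2)} in Duhamel form:
\begin{equation*}
\psi(t) = e^{t(1+i\nu)\Delta}\psi_0 + \int_0^t e^{(t-\tau)(1+i\nu)\Delta}\Big( V\psi(\tau) - (1+i\mu)|\psi(\tau)|^{2\sigma}\psi(\tau) + (r_1+ir_2)\langle \tilde{u}(\tau), Q\rangle \psi(\tau)\Big)\d\tau,
\end{equation*}
and set up the map $\Phi$ sending $\psi$ to the right-hand side on the space $C(J_T;H^s)$ for $T$ small. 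Because $\mathrm{Re}\,(1+i\nu) = 1 > 0$, the operator $e^{t(1+i\nu)\Delta}$ is a bounded analytic semigroup on $H^s$ with $\|e^{t(1+i\nu)\Delta}\|_{\mathcal L(H^s)} \leq 1$; the algebra property controls the polynomial nonlinearity, $\||\psi|^{2\sigma}\psi\|_s \leq C\|\psi\|_s^{2\sigma+1}$, while the potential term is handled by $\|\langle \tilde{u}(\tau),Q\rangle\psi(\tau)\|_s \leq C\|Q\|_{C^s}\|\tilde{u}(\tau)\|\,\|\psi(\tau)\|_s$ and then Cauchy–Schwarz in $\tau$, which produces a factor $T^{1/2}\|\tilde u\|_{L^2(J_T;\mathbb R^q)}$. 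Choosing $T$ small depending on $\|\psi_0\|_s$ and $\|\tilde u\|_{L^2}$ makes $\Phi$ a contraction on a suitable ball of $C(J_T;H^s)$, yielding a unique local solution.

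Next I would define the maximal existence time $\tilde T$ as the supremum of times $T$ for which a solution in $C(J_T;H^s)$ exists, and prove the blow-up alternative: if $\tilde T < \infty$ but $\limsup_{t\to\tilde T^-}\|\tilde\psi(t)\|_s < \infty$, then the local existence time from any starting time is bounded below by a fixed positive constant (it depends only on $\|\tilde\psi(t)\|_s$ and $\|\tilde u\|_{L^2}$), so the solution could be extended past $\tilde T$, a contradiction; hence $\|\tilde\psi(t)\|_s\to\infty$. Uniqueness on $J_T$ follows from a Gronwall argument applied to the difference of two solutions, using local Lipschitz bounds for the nonlinearity on bounded sets of $H^s$.

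For parts (i) and (ii) — the stability statement — the idea is to treat $\psi$ with data $(\psi_0,u)$ near $(\tilde\psi_0,\tilde u)$ as a perturbation of $\tilde\psi$. Writing $w = \psi - \tilde\psi$, subtracting the two Duhamel formulas, and using that $z\mapsto |z|^{2\sigma}z$ is Lipschitz on bounded subsets of $H^s$ (with constant depending on $\Lambda$), one gets an integral inequality
\begin{equation*}
\|w(t)\|_s \leq \|\psi_0-\tilde\psi_0\|_s + C(\Lambda)\|u-\tilde u\|_{L^2(J_T;\mathbb R^q)} + C(\Lambda)\int_0^t \big(1 + \|u(\tau)\|\big)\|w(\tau)\|_s\,\d\tau,
\end{equation*}
valid as long as $\psi$ stays in a bounded set; a continuity/bootstrap argument shows that for $\delta$ small enough $\psi$ indeed remains in $B_{H^s}(0,\Lambda+1)$ on all of $J_T$, and Gronwall's inequality then gives both the existence of $\psi$ on the full interval $J_T$ and the Lipschitz estimate \eqref{continuity}. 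The main technical obstacle is the bookkeeping in this last bootstrap: one must simultaneously show the solution does not leave the ball where the Lipschitz constants are valid \emph{and} close the Gronwall estimate, choosing $\delta = \delta(T,\Lambda)$ accordingly; everything else is routine semigroup and Banach-algebra estimation. Since the statement explicitly defers the proof to the appendix, I would present it there with the calculations filled in.
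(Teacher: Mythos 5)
Your proposal is correct and follows essentially the same route as the paper's appendix proof: mild (Duhamel) formulation, a fixed-point argument in a ball of $C(J_T;H^s)$ using the Banach-algebra property of $H^s$ for $s>d/2$ and Cauchy--Schwarz for the $L^2$-in-time control term, the standard blow-up alternative for the maximal time, and a Gronwall/bootstrap argument for the Lipschitz stability (i)--(ii). The only differences are cosmetic: the paper absorbs the $V\psi$ term into the semigroup (written via an explicit Green's function), obtains the contraction by iterating $\Phi^n$ rather than shrinking $T$, and closes the stability estimate by concatenating small subintervals rather than by a weighted Gronwall inequality.
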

\begin{remark}
	The global well-posedness of problem \eqref{(0.1)}, \eqref{(0.2)} is rather delicate even without any control.  More precisely, the global well-posedness of \eqref{(00)} can be established in the one-dimensional case, while in higher dimensions, the global well-posedness of \eqref{(00)} depends on specific choices of $\mu$ and $\nu$, we refer the interested readers to \cite[Theorem~3.1.39]{GuoJiangLi}.
\end{remark}

\subsection{Small-time asymptotic property}
Before formulating the second proposition, let us introduce some notations. For any $\psi_0 \in H^s$ and $T>0$, recall $\Theta\left(\psi_0, T\right)$ be the set of functions $u \in L^2\left(J_T ; \mathbb{R}^q\right)$ such that the system \eqref{(0.1)}, \eqref{(0.2)} has a solution in $C\left(J_T ; H^s\right)$. By Proposition~\ref{prop1.1}, the set $\Theta\left(\psi_0, T\right)$ is open in $L^2\left(J_T ; \mathbb{R}^q\right)$. For any $\varphi \in C^1\left(\mathbb{T}^d ; \mathbb{R}\right)$, recall that
\begin{equation}\label{(1.2)}
\mathbb{B}(\varphi)(x)=\sum_{j=1}^d\left(\partial_{x_j} \varphi(x)\right)^2 .
\end{equation}
For convenience, we choose $ a,\, b\in\R $ such that $$ r_1+ir_2=(1+i\nu)(a+ib)^2. $$
The following asymptotic property plays a key role in this paper.
\begin{proposition}\label{prop1.2}
For any integer $s \geq s_d,\, \psi_0 \in H^s,\, (u,r_1,r_2) \in \mathbb{R}^q\times\R\times\R$, and  non-negative function $\varphi \in C^r\left(\mathbb{T}^d ; \mathbb{R}\right)$ with $r=s+2$, there is a constant $\delta_0>0$ such that, for any $\delta \in\left(0, \delta_0\right)$, we have\footnote{For any vector $u \in \mathbb{R}^q$, with a slight abuse of notation, we denote by the same letter the constant value function equals to $u$.} $\delta^{-1} u \in \Theta\left(e^{-(a+ib) \delta^{-1 / 2} \varphi} \psi_0, \delta\right)$ and
\begin{equation*}\label{(1.3)}
e^{ (a+ib)\delta^{-1 / 2} \varphi} \mathcal{R}_\delta\left(e^{-(a+ib) \delta^{-1 / 2} \varphi} \psi_0, (\delta^{-1} u,r_1,r_2)\right) \rightarrow e^{(r_1+i r_2)\left( \mathbb{B}(\varphi)+\langle u, Q\rangle\right)} \psi_0 \quad \text { in } H^s \text { as } \delta \rightarrow 0^{+}.
\end{equation*}
Here $\mathcal{R}_\delta$ is the restriction of the solution at time $t=\delta$.
\end{proposition}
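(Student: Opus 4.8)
The plan is to recast the left‑hand side of the asserted convergence, after a time rescaling, as the value at $\tau=1$ of a gauge‑transformed solution whose limit is governed by an explicit pointwise‑in‑$x$ linear ODE, and then to justify the passage to the limit by $\delta$‑uniform energy estimates. Fix $\psi_0,u,r_1,r_2$ and $\varphi$, and choose $a,b\in\R$ with $r_1+ir_2=(1+i\nu)(a+ib)^2$ and $a\geq0$ (always possible: the two square roots of $(r_1+ir_2)/(1+i\nu)$ differ by a sign, one has nonnegative real part, and if that number is a nonpositive real both roots are purely imaginary). Put $\beta=(a+ib)\delta^{-1/2}$ and, assuming provisionally the solution of \eqref{(0.1)}, \eqref{(0.2)} with datum $e^{-\beta\varphi}\psi_0$ and control $(\delta^{-1}u,r_1,r_2)$ exists on $[0,\delta]$, set $\eta_\delta(\tau):=e^{\beta\varphi}\mathcal{R}_{\delta\tau}\big(e^{-\beta\varphi}\psi_0,(\delta^{-1}u,r_1,r_2)\big)$ for $\tau\in J$. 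Expanding $\Delta(e^{-\beta\varphi}\eta_\delta)$ and using $(1+i\nu)\beta^2|\nabla\varphi|^2=(r_1+ir_2)\delta^{-1}\mathbb{B}(\varphi)$, one gets $\eta_\delta(0)=\psi_0$ and
\begin{align*}
\partial_\tau\eta_\delta&=(r_1+ir_2)\big(\mathbb{B}(\varphi)+\langle u,Q\rangle\big)\eta_\delta+\delta(1+i\nu)\Delta\eta_\delta-2\delta^{1/2}(1+i\nu)(a+ib)\,\nabla\varphi\cdot\nabla\eta_\delta\\
&\quad-\delta^{1/2}(1+i\nu)(a+ib)\,\Delta\varphi\,\eta_\delta+\delta V\eta_\delta-\delta(1+i\mu)\,e^{-2\sigma a\delta^{-1/2}\varphi}|\eta_\delta|^{2\sigma}\eta_\delta .
\end{align*}
Hence the target is $\eta_0(1)$, where $\eta_0(\tau):=e^{\tau(r_1+ir_2)(\mathbb{B}(\varphi)+\langle u,Q\rangle)}\psi_0$ solves $\partial_\tau\eta_0=(r_1+ir_2)(\mathbb{B}(\varphi)+\langle u,Q\rangle)\eta_0$, $\eta_0(0)=\psi_0$; since $\mathbb{B}(\varphi)+\langle u,Q\rangle\in C^{s+1}$ when $\varphi\in C^{s+2}$, one has $\eta_0\in C^{\infty}(J;H^s)$ with $\|\eta_0\|_{C(J;H^s)}\leq C$. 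Note that the two second‑ and first‑order terms can be re‑grouped as $\delta(1+i\nu)\,e^{\beta\varphi}\Delta(e^{-\beta\varphi}\eta_\delta)-(r_1+ir_2)\mathbb{B}(\varphi)\eta_\delta$, a gauge‑covariant form that will be used in the estimates.

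The first thing to check is that $\delta^{-1}u\in\Theta(e^{-\beta\varphi}\psi_0,\delta)$ for all small $\delta$. Although $|e^{-\beta\varphi}|\leq1$, the Glaeser‑type inequality $|\nabla\varphi|^2\leq2\|\varphi\|_{C^2}\varphi$ (valid for every nonnegative $\varphi\in C^2(\mathbb{T}^d)$, by restriction to line segments and lifting to $\R^d$) gives $\|e^{-\beta\varphi}\|_{C^k}\leq C_k\delta^{-k/4}$, so $\|e^{-\beta\varphi}\psi_0\|_s\leq C\delta^{-s/4}\|\psi_0\|_s$; an $H^s$ energy estimate for \eqref{(0.1)} on $[0,\delta]$ — in which the large term $(r_1+ir_2)\delta^{-1}\langle u,Q\rangle\psi$ produces only a bounded growth factor over a time interval of length $\delta$, and the damping term is controlled once one knows $\|\psi\|_{L^\infty}$ stays bounded (because $\|e^{-\beta\varphi}\psi_0\|_{L^\infty}\leq\|\psi_0\|_{L^\infty}$) — keeps $\|\psi(t)\|_s$ finite, whence existence on $[0,\delta]$ by Proposition~\ref{prop1.1}. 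The heart of the proof is then the $\delta$‑uniform bound $\|\eta_\delta\|_{C(J;H^s)}\leq M$: it comes from an $H^s$ energy estimate in which the grouped operator $e^{\beta\varphi}\Delta(e^{-\beta\varphi}\cdot)$ is kept intact, so that integration by parts yields a dissipation of the form $-\delta\int e^{2a\delta^{-1/2}\varphi}|\nabla(e^{-\beta\varphi}\eta_\delta)|^2\d\mathfrak{m}\leq0$; together with the identity $e^{2a\delta^{-1/2}\varphi}|e^{-\beta\varphi}\eta_\delta|^2=|\eta_\delta|^2$ this dissipation absorbs the otherwise dangerous first‑order contribution $\delta^{1/2}(a+ib)\nabla\varphi\cdot\nabla\eta_\delta$ (which after accounting for its lost derivative is only $O(1)$, not $o(1)$). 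The zeroth‑order and $\delta V$ terms give $\leq C\|\eta_\delta\|_s^2$, and the nonlinear term — whose coefficient satisfies $\|e^{-2\sigma a\delta^{-1/2}\varphi}\|_{C^s}\leq C\delta^{-s/4}$ and, moreover, concentrates on $\{\varphi\lesssim\delta^{1/2}\}$ (exponentially small in all $C^k$‑norms when $\varphi$ is bounded below) — contributes a quantity carrying a positive power of $\delta$. A Grönwall argument plus the continuation criterion then gives the uniform bound.

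With the uniform bound in hand, set $w_\delta=\eta_\delta-\eta_0$. Then $w_\delta(0)=0$ and, using the cancellation of $(r_1+ir_2)\mathbb{B}(\varphi)\eta_0$,
\begin{align*}
\partial_\tau w_\delta&=(r_1+ir_2)\langle u,Q\rangle w_\delta+\delta(1+i\nu)e^{\beta\varphi}\Delta(e^{-\beta\varphi}w_\delta)+\delta(1+i\nu)\Delta\eta_0-2\delta^{1/2}(1+i\nu)(a+ib)\nabla\varphi\cdot\nabla\eta_0\\
&\quad-\delta^{1/2}(1+i\nu)(a+ib)\Delta\varphi\,\eta_0+\delta V\eta_\delta-\delta(1+i\mu)e^{-2\sigma a\delta^{-1/2}\varphi}|\eta_\delta|^{2\sigma}\eta_\delta .
\end{align*}
One estimates $w_\delta$ in $H^s$ by the grouped‑Laplacian argument above for its own dissipation, treating the rest as sources. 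The source $\delta(1+i\nu)\Delta\eta_0$ is integrated by parts in time in the mild formulation: from $\partial_\sigma\big(e^{\delta(\tau-\sigma)(1+i\nu)\Delta}\eta_0(\sigma)\big)=e^{\delta(\tau-\sigma)(1+i\nu)\Delta}\big(\partial_\sigma\eta_0-\delta(1+i\nu)\Delta\eta_0\big)$ and $\partial_\sigma\eta_0=(r_1+ir_2)(\mathbb{B}(\varphi)+\langle u,Q\rangle)\eta_0$, its Duhamel contribution equals $-\eta_0(\tau)+e^{\delta\tau(1+i\nu)\Delta}\psi_0+\int_0^\tau e^{\delta(\tau-\sigma)(1+i\nu)\Delta}(r_1+ir_2)(\mathbb{B}(\varphi)+\langle u,Q\rangle)\eta_0(\sigma)\d\sigma$, which tends to $-\eta_0(\tau)+\psi_0+\int_0^\tau(r_1+ir_2)(\mathbb{B}(\varphi)+\langle u,Q\rangle)\eta_0\d\sigma=0$ in $H^s$ by strong continuity of the heat semigroup; the $\delta^{1/2}$‑terms and the nonlinear term carry genuine positive powers of $\delta$ (for $\nabla\varphi\cdot\nabla\eta_0$ one uses $\nabla\eta_0\in C(J;H^{s-1})$, the $(\delta\,\cdot\,)^{-1/2}$ smoothing of the semigroup, and, to recover the top $H^s$‑regularity, an analogous integration by parts in time or the uniform bound together with interpolation). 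A singular Grönwall inequality then yields $\|w_\delta\|_{C(J;H^s)}\to0$, i.e. $\eta_\delta(1)\to e^{(r_1+ir_2)(\mathbb{B}(\varphi)+\langle u,Q\rangle)}\psi_0$ in $H^s$, which is the claim.

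The main obstacle throughout is the $\delta$‑uniform control of the terms that are singular as $\delta\to0^+$: the first‑order term with effective coefficient $O(1)$, which is tamed only by keeping the Laplacian in the gauge‑covariant form $e^{\beta\varphi}\Delta(e^{-\beta\varphi}\cdot)$ and exploiting the associated weighted dissipation together with $e^{2a\delta^{-1/2}\varphi}|e^{-\beta\varphi}\eta_\delta|^2=|\eta_\delta|^2$; and the nonlinearity, whose coefficient $e^{-2\sigma a\delta^{-1/2}\varphi}$ has $C^s$‑norm of size $\delta^{-s/4}$ and must be handled via the Glaeser inequality and the concentration of this factor near $\{\varphi=0\}$. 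Obtaining the convergence of the source terms in $H^s$ itself — rather than in a weaker norm — by absorbing the loss of a derivative in $\delta(1+i\nu)\Delta\eta_0$ and in $\delta^{1/2}\nabla\varphi\cdot\nabla\eta_0$ is the delicate point, and is where the integration by parts in time, the parabolic smoothing, and the two extra derivatives assumed on $\varphi$ (the choice $r=s+2$) are used.
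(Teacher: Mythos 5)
Your overall skeleton is the same as the paper's: gauge-transform by $e^{\pm(a+ib)\delta^{-1/2}\varphi}$, rescale time to $J=[0,1]$, observe that the transformed flow satisfies a perturbation of the ODE $\partial_\tau\eta=(r_1+ir_2)(\mathbb{B}(\varphi)+\langle u,Q\rangle)\eta$, and close with $H^s$ energy estimates, the integration-by-parts identity for the first-order term $\delta^{1/2}\nabla\varphi\cdot\nabla\eta$, and Gr\"onwall plus a continuation argument for existence beyond $t=\delta$. Where you genuinely diverge is in the treatment of the derivative-losing sources. The paper does \emph{not} compare $\eta_\delta$ with $e^{\tau(r_1+ir_2)(\mathbb{B}(\varphi)+\langle u,Q\rangle)}\psi_0$ directly; it compares with $w(\tau)=e^{\tau(r_1+ir_2)(\mathbb{B}(\varphi)+\langle u,Q\rangle)}\psi_0^\delta$, where $\psi_0^\delta=e^{\delta^{1/4}\Delta}\psi_0$ is a $\delta$-dependent mollification satisfying $\|\psi_0^\delta\|_{s+2}\leq C\delta^{-1/4}$ and $\|\psi_0-\psi_0^\delta\|_s\to0$. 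With this one move the sources $\delta\Delta w$ and $\delta^{1/2}\nabla\varphi\cdot\nabla w$ are $O(\delta^{3/4})$ and $O(\delta^{1/4})$ in $H^s$, everything stays inside a single energy estimate for $v=\eta_\delta-w$, and no parabolic smoothing or mild formulation is needed (a preliminary reduction to $\psi_0\in H^{2s+2}$ plus density justifies the formal computations).

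The gap in your version is precisely at the point you yourself flag as delicate. You keep $\eta_0$ unregularized, so $\delta(1+i\nu)\Delta\eta_0$ does not tend to $0$ in $H^s$, and you propose to kill its contribution by an integration by parts in time in the Duhamel formula for the semigroup $e^{\delta\tau(1+i\nu)\Delta}$. But $e^{\delta\tau(1+i\nu)\Delta}$ is not the propagator of your $w_\delta$-equation: the actual linear part is the gauge-covariant operator $\delta(1+i\nu)e^{\beta\varphi}\Delta(e^{-\beta\varphi}\cdot)+(r_1+ir_2)\langle u,Q\rangle$, whose difference from $\delta(1+i\nu)\Delta$ contains the $O(1)$-coefficient terms $-2\delta^{1/2}(1+i\nu)(a+ib)\nabla\varphi\cdot\nabla(\cdot)$ and $(r_1+ir_2)\mathbb{B}(\varphi)(\cdot)$. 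So the identity you compute (that the heat-Duhamel contribution of $\delta\Delta\eta_0$ converges to $-\eta_0(\tau)+\psi_0+\int_0^\tau(r_1+ir_2)(\mathbb{B}(\varphi)+\langle u,Q\rangle)\eta_0\,\d\sigma=0$) does not bound the term actually appearing in your estimate; and if you split off a piece $w_\delta^1$ solving the pure heat equation with that source, the remainder equation acquires $\nabla w_\delta^1$ with an $O(1)$ coefficient, which requires $H^{s+1}$ control of $w_\delta^1$ that you have not established. Moreover, you cannot simultaneously run an $H^s$ energy (Gr\"onwall) estimate for the dissipative and first-order terms and a mild-formulation cancellation for one selected source without explicitly combining the two, and the phrases ``carries a positive power of $\delta$'' and ``an analogous integration by parts in time or the uniform bound together with interpolation'' leave exactly the top-order $H^s$ convergence unproved. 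The same issue affects $\delta^{1/2}\nabla\varphi\cdot\nabla\eta_0$, which is only $O(\delta^{1/2})\|\eta_0\|_{s+1}$ and hence not small when $\psi_0$ is merely $H^s$. All of this is repaired at once by the paper's mollification of the initial datum in the comparison flow; I would recommend adopting that device rather than the semigroup argument.
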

\begin{remark}
This result is inspired by \cite[Proposition~1.2]{Duca_EMS} and \cite[Proposition~2.3]{Duca_JMPA} (see also \cite[Proposition~2]{Nersesyan21}). However, compared to the results in these two papers, one may observe that both the initial state and the target state oscillate, and their moduli scale simultaneously.
\end{remark}
\begin{proof}[Proof of Proposition~\ref{prop1.2}]
Let us fix arbitrarily $M>0$ and assume that $\psi_0 \in H^s,\, \varphi \in C^r\left(\mathbb{T}^d ; \mathbb{R}\right)$, and $u \in \mathbb{R}^q$ are such that
\begin{equation}\label{(3.1)}
	\left\|\psi_0\right\|_s+\|\varphi\|_{C^r}+\|u\|_{\mathbb{R}^q} \leq M.
\end{equation}
Recall that $$ r_1+ir_2=(1+i\nu)(a+ib)^2. $$
For any $\delta>0$, we denote $\phi(t):=e^{(a+ib) \delta^{-1 / 2} \varphi} \mathcal{R}_t\left(e^{-(a+ib) \delta^{-1 / 2} \varphi} \psi_0, (\delta^{-1} u,r_1,r_2)\right)$. According to Proposition~\ref{prop1.1}, $\phi(t)$ exists up to some maximal time $T^\delta=\tilde{T}\left(e^{-(a+ib) \delta^{-1 / 2} \varphi} \psi_0, \delta^{-1} u\right)$. Moreover, if $T^\delta<\infty$, then
$$
\left\|e^{-(a+ib) \delta^{-1 / 2} \varphi} \phi(t)\right\|_s \rightarrow \infty \quad \text { as } t \rightarrow T^{\delta-}.
$$

We need to show that
\begin{itemize}
	\item[(a)] there is a constant $\delta_0>0$ such that $T^\delta>\delta$ for any $\delta<\delta_0$;
	\item[(b)] $\phi(\delta) \rightarrow e^{(r_1+i r_2)\left( \mathbb{B}(\varphi)+\langle u, Q\rangle\right)} \psi_0$ in $H^s$ as $\delta \rightarrow 0^{+}$.
\end{itemize}
To prove these properties, we introduce the functions
\begin{equation}
	w(t):=e^{(1+i\nu)(a+ib)^2\left( \mathbb{B}(\varphi)+\langle u, Q\rangle\right) t} \psi_0^\delta=e^{(r_1+i r_2)\left( \mathbb{B}(\varphi)+\langle u, Q\rangle\right) t} \psi_0^\delta, \quad v(t):=\phi(\delta t)-w(t),
\end{equation}
where $\psi_0^\delta \in H^r$ satisfies\footnote{In what follows, $C$ denotes positive constants which may change from line to line. These constants depend on the parameters $M,\, V,\, Q, \,\nu,\, \mu,\, r_1,\, r_2,\, a,\, b,\, \sigma,\, d,\, s$, but not on $\delta$.}
\begin{align}
	&\|\psi_0^\delta\|_{s}  \leq C  \quad\quad\quad  \quad \text{for } \delta\leq 1,\label{(3.3)}\\
	& \|\psi_0^\delta\|_r \leq C   \delta^{-1/4}  \,\,\,\,\quad \text{for } \delta\leq 1,\label{(3.4)}\\
	&  \|	\psi_0-\psi_0^\delta\|_s\to 0  \quad\,\,\, \text{ as $\delta\to 0^+$}.\nonumber
\end{align}
For example, we could choose $\psi_0^\delta$ by using the heat semigroup: $\psi_0^\delta=e^{\delta^{1 / 4} \Delta} \psi_0$. In view of \eqref{(3.1)}-\eqref{(3.4)}, we have
\begin{align}
	\|w(t)\|_s &\leq C,\quad\,\,\quad\quad \forall t \in[0,2], \label{(3.5)}\\
	\|w(t)\|_r &\leq C \delta^{-1 / 4}, \quad \forall t \in[0,2] .\label{(3.6)}
\end{align}
Furthermore, $v(t)$ is well-defined for $t<\delta^{-1} T^\delta$ and satisfies the equation
\begin{align}
	\partial_t v= &\delta V(v+w) + \delta (1+i\nu)\Delta(v+w) -\delta (1+i\mu)|e^{-(a+ib)\delta^{-1/2}\varphi}(v+w)|^{2\sigma}(v+w)\nonumber\\
	& -\delta^{1/2}(1+i\nu)(a+ib)\left[\Delta\varphi(v+w) + 2\nabla\varphi\cdot\nabla(v+w)\right]\nonumber \\
	& +(r_1+ir_2)\left[\langle u,Q\rangle  +\mathbb{B}(\varphi)\right]v, \label{(3.7)}
\end{align}
with the initial condition
\begin{equation}\label{(3.8)}
	v(0)=\psi_0-\psi_0^\delta.
\end{equation}

Let us start by assuming that $ \psi_0\in  H^{2s+2}$. It follows that $ \psi(t)\in H^{2s+2}$ and $ v(t)\in H^{2s+2}$ for every $ t\in(0,T^\delta) $.
Let the multi-index notation $\alpha=\left(\alpha_1, \ldots, \alpha_d\right) \in \mathbb{N}^d$ satisfying $|\alpha|=\left|\alpha_1\right|+\cdots+\left|\alpha_d\right| \leq s$. Taking first the scalar product of \eqref{(3.7)} with $\partial^{2 \alpha} v$ in $L^2$ and then integrating by parts, we obtain
\begin{align}
	&\frac{\d}{\d t}\int_{[0,2\pi]^d}|\partial^\alpha v|^2\d x \nonumber\\
	\leq &2\delta|V|\|v+w\|_s\|v\|_s \nonumber\\
	& +2 \delta  (1+\nu^2)^{1/2}\|w\|_r\|v\|_s\nonumber\\
	& +2 \delta (1+\mu^2)^{1/2}\left| \langle  \partial^\alpha[| e^{-(a+ib)\delta^{-1/2} \varphi}(v+w) |^{2 \sigma}(v+w)], \partial^{\alpha} v\rangle_{L^2([0,2\pi]^d)} \right|\nonumber\\
	& +2 \delta^{1 / 2} (1+\nu^2)^{1/2} (a^2+b^2)^{1/2} \left|  \langle\partial^\alpha[\Delta \varphi(v+w) + 2\nabla \varphi \cdot \nabla(v+w)], \partial^{\alpha} v \rangle_{L^2([0,2\pi]^d)}\right|\nonumber\\
	&  +2 (r_1^2+r_2^2)^{1/2}\left| \langle\partial^\alpha\left[(\langle u, Q\rangle+\mathbb{B}(\varphi)) v\right], \partial^{\alpha} v \rangle_{L^2([0,2\pi]^d)}\right|\nonumber\\
	=:&\sum_{i=1}^{5}I_i. \label{(3.9)}
\end{align}
We estimate the terms $I_1,\, I_2,\, I_3$, and $I_5$ by integrating by parts and by using \eqref{(3.1)}, \eqref{(3.5)}, and \eqref{(3.6)}:
$$
\begin{aligned}
	& \left|I_1\right| \leq C \delta\|v+w\|_s\|v\|_s \leq  C \delta\|v\|_s^2+C \delta\|v\|_s, \\
	& \left|I_2\right| \leq C \delta\|w\|_r\|v\|_s \leq C \delta^{3 / 4}\|v\|_s, \\
	& \left|I_3\right| \leq C \delta\|v+w\|_s^{2 \sigma+1}\|v\|_s \leq C \delta\|v\|_s^{2(\sigma+1)}+C \delta\|v\|_s \\
	& \left|I_5\right| \leq C\|v\|_s^2 .
\end{aligned}
$$
We estimate $I_4$ as follows:
$$
\left|I_4\right| \leq C \delta^{1 / 2}\|v\|_s^2+C \delta^{1 / 2}\|w\|_{s+1}\|v\|_s \leq C \delta^{1 / 2}\|v\|_s^2+C \delta^{1 / 4}\|v\|_s ,
$$
where we have used again integration by parts, the identities \eqref{(3.1)}, \eqref{(3.5)} and \eqref{(3.6)}, and the equality
$$
\left\langle\partial_{x_j} \varphi \partial_{x_j} \partial^\alpha v, \partial^\alpha v\right\rangle_{L^2}=\frac{1}{2} \langle\partial_{x_j} \varphi, \partial_{x_j}\left| \partial^\alpha v\right|^2 \rangle_{L^2}=- \langle\partial_{x_j}^2 \varphi,\left| \partial^\alpha v\right|^2 \rangle_{L^2}.
$$
Summing up inequalities \eqref{(3.9)} for all $\alpha \in \mathbb{N}^d,\,|\alpha| \leq s$, combining the resulting inequality with the estimates for $I_j$ and the Young inequality, and recalling that $\delta \leq 1$, we obtain
$$
\partial_t\|v\|_s^2 \leq C \delta^{1 / 2}+C\left(1+\delta^{1 / 2}\right)\|v\|_s^2+C \delta\|v\|_s^{2(\sigma+1)}, \quad t \leq \delta^{-1} T^\delta.
$$
This inequality, together with \eqref{(3.8)} and the Gronwall inequality, implies that
\begin{equation}\label{(3.10)}
	\|v(t)\|_s^2 \leq e^{C\left(1+\delta^{1 / 2}\right) t}\left(C \delta^{1 / 2} t+\left\|\psi_0-\psi_0^\delta\right\|_s^2+C \delta \int_0^t\|v(\tau)\|_s^{2(\sigma+1)} \d \tau\right)
\end{equation}
for $t \leq \delta^{-1} T^\delta$ and for every $ \psi_0\in  H^{2s+2}$.  

Finally, we can extend the validity of \eqref{(3.10)} for every $\psi_0 \in H^s$ thanks to (i) of Proposition~\ref{prop1.1} and to the density of $H^{2 s+2}$ into $H^s$ with respect to the $H^s$-norm.
Let us take $\delta_0 \in(0,1)$ small enough such that, for $\delta<\delta_0$,
\begin{align}
	& \left\|\psi_0-\psi_0^\delta\right\|_s^2<1, \label{(3.11)}\\
	& e^{C\left(1+\delta^{1 / 2}\right)}\left(C \delta^{1 / 2}+\left\|\psi_0-\psi_0^\delta\right\|_s^2\right)<1 / 2,\label{(3.12)}
\end{align}
and denote
$$
\tau^\delta=\sup \left\{t<\delta^{-1} T^\delta\mid\|v(t)\|_s<1\right\}.
$$
From \eqref{(3.8)} and \eqref{(3.11)} it follows that $\tau^\delta>0$ for $\delta<\delta_0$. Let us show that $\tau^\delta>1$, provided that
\begin{equation}\label{(3.13)}
	\delta_0<\left(2 C e^{2 C}\right)^{-1} .
\end{equation}
To reach a contradiction, we assume that $\tau^\delta \leq 1$. Let $t=\tau^\delta$ in \eqref{(3.10)}. By using \eqref{(3.12)} and \eqref{(3.13)}, we obtain
$$
1=\left\|v(\tau^\delta)\right\|_s^2<\frac{1}{2}+\frac{1}{2} \int_0^{\tau^\delta}\|v(y)\|_s^{2(\sigma+1)} \mathrm{d} y \leq 1 .
$$
This contradiction shows that $\tau^\delta>1$ for $\delta<\delta_0$. Therefore, we have $1<\delta^{-1} T^\delta$. Thus, the property (a) is proved. Taking $t=1$ in \eqref{(3.10)}, we arrive at
$$
\|v(1)\|_s^2 \leq e^{C\left(1+\delta^{1 / 2}\right)}\left(C \delta^{1 / 2}+\left\|\psi_0-\psi_0^\delta\right\|_s^2+C \delta\right) \rightarrow 0 \quad \text { as } \delta \rightarrow 0^{+} .
$$
This implies (b), and completes the proof.
\end{proof}

\section{Small-time approximate controllability}\label{sec:Approximate controllability}
In what follows, we assume that $s \geq s_d$ is an integer and denote $r=s+2$ as in Proposition~\ref{prop1.2}. 
We start this section with a definition of a saturation property introduced in \cite{Duca_EMS}. 
Let $H$ be a finite-dimensional subspace of $C^r\left(\mathbb{T}^d ; \mathbb{R}\right)$, and let $F(H)$ be the largest subspace of $C^r\left(\mathbb{T}^d ; \mathbb{R}\right)$ whose elements can be represented in the form
$$
\theta_0+\sum_{j=1}^n \mathbb{B}\left(\theta_j\right)
$$
for some integer $n \geq 1$ and functions $\theta_j \in H,\, j=0, \ldots, n$, where $\mathbb{B}$ is given by \eqref{(1.2)}. As $\mathbb{B}$ is quadratic, $F(H)$ is well-defined and finite-dimensional. 
Let us define a nondecreasing sequence $\left\{H_j\right\}$ of finite-dimensional subspaces by $H_0=H$ and $H_j=F\left(H_{j-1}\right), j \geq 1$, and denote
\begin{equation*}\label{(2.1)}
H_{\infty}:=\bigcup_{j=1}^{\infty} H_j .
\end{equation*}
\begin{definition}\label{def2.1}
A finite-dimensional subspace $H \subset C^r\left(\mathbb{T}^d ; \mathbb{R}\right)$ is said to be saturating, if $H_{\infty}$ is dense in $C^r\left(\mathbb{T}^d ; \mathbb{R}\right)$.
\end{definition}
Throughout this section, we assume that the following condition is satisfied:
\begin{description}\item[\hypertarget{H1}]{\textbf{($\mathscr{P}$)}}  
The field $Q=(Q_1, \ldots, Q_q) $ is saturating, i.e., the subspace $$H=\operatorname{span}\left\{Q_j\mid j=1, \ldots, q\right\}$$ is saturating in the sense of Definition~\ref{def2.1}.
\end{description}  
\subsection{Small-time approximate controllability}
A direct consequence of Proposition~\ref{prop1.2} is the small-time globally approximate null-controllability.
\begin{theorem}\label{thm00}
Assume that the condition~{\hyperlink{H1}{\textbf{($\mathscr{P}$)}}} is satisfied, $ \mathbf{1}\in \operatorname{span}\left\{Q_j\mid j=1, \ldots, q\right\} $ and $\psi_0 \in H^s$, where $s \geq s_d$ is an integer. For any $\varepsilon,\, T>0$ and $r_1,\, r_2\in\R$, there exists $u\in \Theta\left(\psi_0, T\right) $ such that 
\begin{equation*}
\|\mathcal{R}_T \left(\psi_0, (u,r_1,r_2)\right)\|_s< \varepsilon.
\end{equation*}
\end{theorem}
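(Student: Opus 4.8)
The plan is to build the control on $[0,T]$ by concatenating two stages: on a short interval $[0,\delta]$, a large constant control that, through Proposition~\ref{prop1.2}, drives $\psi_0$ arbitrarily close in $H^s$ to a state of arbitrarily small norm; then free evolution ($u\equiv0$) on $[\delta,T]$, whose effect is absorbed by the stability estimate of Proposition~\ref{prop1.1} around the zero solution. Note that the saturation hypothesis {\hyperlink{H1}{\textbf{($\mathscr{P}$)}}} plays no role for this statement; only $\mathbf 1\in\operatorname{span}\{Q_j\}$ is used, and only to make the first stage act as a pure scalar rescaling of $\psi_0$.

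I would first fix the tolerances. Applying Proposition~\ref{prop1.1}(ii) with $\tilde\psi_0=0$, $\tilde u\equiv0$ on the horizon $T$ (for which $\tilde\psi\equiv0$, hence $\Lambda=0$), we obtain $\delta_*,C_*>0$ such that every $\psi_1\in H^s$ with $\|\psi_1\|_s<\delta_*$ has its free evolution ($u\equiv0$) defined on $[0,T]$ with $\sup_{t\in[0,T]}\|\mathcal R_t(\psi_1,(0,r_1,r_2))\|_s\le C_*\|\psi_1\|_s$. Set $\eta:=\tfrac12\min\{\delta_*,\ \varepsilon/C_*\}$. The case $\psi_0=0$ is trivial ($u\equiv0$), so assume $\psi_0\ne0$. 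Since $r_1\ne0$ we may choose $\lambda\in\R$ with $e^{r_1\lambda}\|\psi_0\|_s<\eta/2$ ($\lambda\to-\infty$ if $r_1>0$, $\lambda\to+\infty$ if $r_1<0$); because $\mathbf 1\in\operatorname{span}\{Q_j\}$, there is a constant vector $u_\lambda\in\R^q$ with $\langle u_\lambda,Q(\cdot)\rangle\equiv\lambda$ on $\mathbb T^d$.

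Now I would invoke Proposition~\ref{prop1.2} with the non-negative function $\varphi\equiv0$ and the constant control $u_\lambda$: then $\mathbb B(0)\equiv0$ and both exponential factors in Proposition~\ref{prop1.2} are identically $1$, so it yields $\delta_0>0$ such that, for every $\delta\in(0,\delta_0)$, $\delta^{-1}u_\lambda\in\Theta(\psi_0,\delta)$ and $\mathcal R_\delta(\psi_0,(\delta^{-1}u_\lambda,r_1,r_2))\to e^{(r_1+ir_2)\lambda}\psi_0$ in $H^s$ as $\delta\to0^+$. The limit equals the scalar $e^{(r_1+ir_2)\lambda}$, of modulus $e^{r_1\lambda}$, times $\psi_0$, so it has $H^s$-norm $<\eta/2$; fixing $\delta\in(0,\min\{\delta_0,T\})$ small enough, the state $\psi_\delta:=\mathcal R_\delta(\psi_0,(\delta^{-1}u_\lambda,r_1,r_2))$ has $\|\psi_\delta\|_s<\eta\le\delta_*$. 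Finally, let $u(t)=\delta^{-1}u_\lambda$ for $t\in[0,\delta]$ and $u(t)=0$ for $t\in(\delta,T]$, so $u\in L^2(J_T;\R^q)$. The solution exists on $[0,\delta]$ by the first stage, and by autonomy it coincides on $[\delta,T]$ with the free flow issued from $\psi_\delta$; since $\|\psi_\delta\|_s<\delta_*$, that flow is defined on all of $[0,T]$ and
\[
\|\mathcal R_T(\psi_0,(u,r_1,r_2))\|_s=\|\mathcal R_{T-\delta}(\psi_\delta,(0,r_1,r_2))\|_s\le C_*\|\psi_\delta\|_s<C_*\eta\le\varepsilon/2<\varepsilon .
\]
Thus $u\in\Theta(\psi_0,T)$ and the desired estimate holds.

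All of the analysis is concentrated in Proposition~\ref{prop1.2}, which is already established; the remainder is bookkeeping, the delicate points being that $\delta_*,C_*$ must be chosen once and for all for the full horizon $T$ (so that $[\delta,T]\subset[0,T]$ after $\delta$ is shrunk) and that $0$ is an equilibrium of \eqref{(0.1)}, so Proposition~\ref{prop1.1}(ii) may be centred there. The one genuinely different case is $r_1=0$, which is not needed for Theorems~\ref{thm0} and \ref{thma} (where $r_1=1$): there $|e^{(r_1+ir_2)\lambda}|=1$, so the multiplicative action cannot shrink $\|\psi_0\|_s$, and one would have to rely instead on the parabolic dissipation of the free Ginzburg--Landau flow on $[\delta,T]$ — which, in the presence of the possibly amplifying term $V\psi$, is itself a subtle point.
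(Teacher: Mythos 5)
Your proof is correct and follows essentially the same route as the paper's: Proposition~\ref{prop1.2} with $\varphi\equiv 0$ and a constant control realizing $\langle u_c,Q\rangle\equiv\lambda$ to shrink the modulus of the state by the factor $e^{r_1\lambda}$, followed by free evolution on $[\delta,T]$ controlled via the stability estimate of Proposition~\ref{prop1.1} around the zero solution (you merely fix the constants $\delta_*,C_*$ before choosing $\delta$, which is a cleaner bookkeeping of the same argument). Your closing remark about $r_1=0$ is apt: the paper's own proof also tacitly assumes $r_1\neq 0$ when it picks $c$ with $e^{cr_1}<\varepsilon/(2\|\psi_0\|_s)$, so the statement ``for any $r_1,r_2\in\R$'' is not fully covered by either argument, although the case actually used for Theorem~\ref{thm0} has $r_1=1$.
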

\begin{proof}
	For any $\varepsilon>0$, let $c\in\R$  satisfy
	\begin{equation*}
		|e^{(r_1+ir_2)c}|=e^{cr_1}<\frac{\varepsilon}{2\|\psi_0\|_s}.
	\end{equation*}
	That is
	\begin{equation*}
		\|e^{(r_1+ir_2)c}\psi_0\|_s<\frac\varepsilon2.
	\end{equation*}
	Since $ \mathbf{1}\in \operatorname{span}\left\{Q_j\mid j=1, \ldots, q\right\} $, we can choose $u_c\in\R^q$ such that  $c=\langle u_c,Q\rangle$.
	Thanks to Proposition~\ref{prop1.2}, there exists $\delta>0$ such that $\delta^{-1} u_c \in \Theta\left( \psi_0, \delta\right)$ and
	\begin{equation*}
		\|\mathcal{R}_\delta\left(  \psi_0, (\delta^{-1} u_c,r_1,r_2)\right)- e^{(r_1+i r_2)c} \psi_0\|_s< \frac{\varepsilon}{2}.
	\end{equation*}
	Hence
	\begin{equation*}
		\|\mathcal{R}_\delta\left(  \psi_0, (\delta^{-1} u_c,r_1,r_2)\right) \|_s\leq \|\mathcal{R}_\delta\left(  \psi_0, (\delta^{-1} u_c,r_1,r_2)\right)- e^{(r_1+i r_2)c} \psi_0\|_s +\|e^{(r_1+i r_2)c} \psi_0\|_s< \varepsilon.
	\end{equation*}
	By (i) of Proposition~\ref{prop1.1}, $\mathcal{R}_{t}\left(\mathcal{R}_\delta\left(  \psi_0, (\delta^{-1} u_c,r_1,r_2)\right), (0,r_1,r_2)\right)$ and $\mathcal{R}_{t}\left(0,(0,r_1,r_2)\right)\equiv0$ are defined in the same time interval $t\in[0,T-\delta]$ when $\varepsilon$ is sufficiently small.
	Furthermore, (ii) of Proposition~\ref{prop1.1} implies  the existence of $C > 0$, independent of $ \mathcal{R}_\delta\left(  \psi_0, (\delta^{-1} u_c,r_1,r_2)\right)$, such that
	\begin{equation*}
		\|\mathcal{R}_{T-\delta}\left(\mathcal{R}_\delta\left(  \psi_0, (\delta^{-1} u_c,r_1,r_2)\right), (0,r_1,r_2)\right)- \mathcal{R}_{T-\delta}\left(0,(0,r_1,r_2)\right)\|_s\leq C 	\|\mathcal{R}_\delta\left(  \psi_0, (\delta^{-1} u_c,r_1,r_2)\right) \|_s <C\varepsilon.
	\end{equation*}
	Hence, the proof is completed by taking the control \begin{equation*}
		u(t)= \begin{cases}
		\delta^{-1}u_c,\quad &t\in(0,\delta),\\
		0,\quad &t\in(\delta,T). 
		\end{cases} 
	\end{equation*}
\end{proof}

\begin{theorem}\label{thm2.2}
Assume that the condition~{\hyperlink{H1}{\textbf{($\mathscr{P}$)}}} is satisfied, $ \mathbf{1}\in \operatorname{span}\left\{Q_j\mid j=1, \ldots, q\right\} $ and $ r_1=1,\, r_2=-\nu $.
Then for any $\varepsilon>0,\, \mathcal{T}>0,\, \psi_0 \in H^s$, where $s \geq s_d$ is an integer, and $\theta \in C^r\left(\mathbb{T}^d ; \mathbb{R}\right)$, there are $T \in(0, \mathcal{T})$ and $u \in \Theta\left(\psi_0, T\right) \cap C^{\infty}\left(J_T ; \mathbb{R}^q\right)$ such that
\begin{equation}\label{(2.2)}
\left\|\mathcal{R}_T\left(\psi_0, (u,1,-\nu)\right)-e^{(1-i\nu) \theta} \psi_0\right\|_s<\varepsilon.
\end{equation}
\end{theorem}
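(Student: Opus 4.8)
The plan is to follow the multiplicative Agrachev--Sarychev scheme and prove, by induction on $j \geq 0$, the statement $P(j)$: for every $\phi \in H^s$, every $\theta \in H_j$, every $\varepsilon > 0$ and every $\mathcal{T}' > 0$, there are $T' \in (0,\mathcal{T}')$ and a control $u \in \Theta(\phi,T') \cap C_c^\infty((0,T');\mathbb{R}^q)$ (smooth and flat at both endpoints, hence extending to an element of $C^\infty(J_{T'};\mathbb{R}^q)$) with $\|\mathcal{R}_{T'}(\phi,(u,1,-\nu)) - e^{(1-i\nu)\theta}\phi\|_s < \varepsilon$. Granting every $P(j)$, Theorem~\ref{thm2.2} follows: given $\theta \in C^r(\mathbb{T}^d;\mathbb{R})$ and $\varepsilon > 0$, the saturation hypothesis~{\hyperlink{H1}{\textbf{($\mathscr{P}$)}}} makes $H_\infty = \bigcup_j H_j$ dense in $C^r$, so I pick $j$ and $\tilde\theta \in H_j$ with $\|\theta - \tilde\theta\|_{C^r}$ so small that $\|e^{(1-i\nu)\theta}\psi_0 - e^{(1-i\nu)\tilde\theta}\psi_0\|_s < \varepsilon/2$ — using that $\vartheta \mapsto e^{(1-i\nu)\vartheta}$ is continuous from $C^s \supset C^r$ into the bounded multipliers of $H^s$ since $s$ is an integer — and then apply $P(j)$ with tolerance $\varepsilon/2$, initial state $\psi_0$, and $\mathcal{T}' = \mathcal{T}$.

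For the base case $P(0)$, write $\theta \in H_0 = \operatorname{span}\{Q_1,\dots,Q_q\}$ as $\theta = \langle u_0, Q\rangle$ with $u_0 \in \mathbb{R}^q$. Proposition~\ref{prop1.2} applied with $\varphi \equiv 0$, $r_1 = 1$, $r_2 = -\nu$ gives $\mathcal{R}_\delta(\phi,(\delta^{-1}u_0,1,-\nu)) \to e^{(1-i\nu)\langle u_0,Q\rangle}\phi = e^{(1-i\nu)\theta}\phi$ in $H^s$; I choose $\delta < \mathcal{T}'$ making this difference $< \varepsilon/2$, then replace the constant control $\delta^{-1}u_0$ on $[0,\delta]$ by a function in $C_c^\infty((0,\delta);\mathbb{R}^q)$ that is $L^2$-close to it, absorbing the resulting error through Proposition~\ref{prop1.1}(ii).

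The inductive step $P(j-1) \Rightarrow P(j)$ rests on one algebraic remark: with $r_1 = 1$, $r_2 = -\nu$ the identity $r_1 + i r_2 = (1+i\nu)(a+ib)^2$ forces $(a+ib)^2 = (1-i\nu)^2/(1+\nu^2)$, so I may take $a+ib = (1-i\nu)/\sqrt{1+\nu^2}$ (in particular $a>0$), whence $e^{-(a+ib)\lambda\varphi} = e^{(1-i\nu)(-\lambda\varphi/\sqrt{1+\nu^2})}$ for every real $\lambda$. Thus the oscillating initial datum $e^{-(a+ib)\delta^{-1/2}\varphi}\phi$ demanded by Proposition~\ref{prop1.2} is exactly $e^{(1-i\nu)\tilde\theta}\phi$ with $\tilde\theta = -\delta^{-1/2}\varphi/\sqrt{1+\nu^2}$ a real multiple of $\varphi$ — so when $\varphi \in H_{j-1}$ it can be \emph{reached}, and the conjugating factor $e^{(a+ib)\delta^{-1/2}\varphi} = e^{-(1-i\nu)\tilde\theta}$ \emph{undone}, by the hypothesis $P(j-1)$. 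Now let $\theta \in H_j = F(H_{j-1})$, write $\theta = \theta_0 + \sum_{k=1}^n \mathbb{B}(\theta_k)$ with all $\theta_k \in H_{j-1}$, and (since $\mathbf{1} \in H_0 \subseteq H_{j-1}$ and $\mathbb{B}$ is unchanged by adding constants) replace each $\theta_k$ by $\varphi_k := \theta_k + c_k \geq 0$, still in $H_{j-1}$. Because $e^{(1-i\nu)\theta} = e^{(1-i\nu)\theta_0}\prod_{k=1}^n e^{(1-i\nu)\mathbb{B}(\theta_k)}$ as commuting multiplication operators, it is enough to realize these $n+1$ factors one after another: $e^{(1-i\nu)\theta_0}$ directly by $P(j-1)$, and each $e^{(1-i\nu)\mathbb{B}(\theta_k)}$ by a three-leg manoeuvre from the current state $\chi$ — (i) use $P(j-1)$ to reach $\approx e^{-(a+ib)\delta_k^{-1/2}\varphi_k}\chi = e^{(1-i\nu)\tilde\theta_k}\chi$, (ii) run the uncontrolled flow $\mathcal{R}_{\delta_k}(\cdot,(0,1,-\nu))$, (iii) use $P(j-1)$ again to apply the multiplier $e^{(a+ib)\delta_k^{-1/2}\varphi_k}$ — whose ideal composite, by Proposition~\ref{prop1.2} with $u=0$, tends as $\delta_k \to 0^+$ to $e^{(1-i\nu)\mathbb{B}(\varphi_k)}\chi = e^{(1-i\nu)\mathbb{B}(\theta_k)}\chi$.

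It remains to assemble the estimates. I first freeze $\delta_n, \delta_{n-1}, \dots, \delta_1$, one at a time and small, so that the resulting \emph{ideal} composite sends $\phi$ to within $\varepsilon/2$ of $e^{(1-i\nu)\theta}\phi$ — using Proposition~\ref{prop1.2} for convergence at each stage and boundedness on $H^s$ of multiplication by the fixed functions $e^{(1-i\nu)\mathbb{B}(\theta_k)}$, $e^{(1-i\nu)\theta_0}$ to propagate the errors. Once every $\delta_k$ is fixed, all operators occurring (the multipliers $e^{\pm(a+ib)\delta_k^{-1/2}\varphi_k}$, of norm $\lesssim e^{C\delta_k^{-1/2}}$, and the short flows $\mathcal{R}_{\delta_k}(\cdot,(0,1,-\nu))$, locally Lipschitz by Proposition~\ref{prop1.1}(ii)) have \emph{finite} Lipschitz constants near the ideal trajectory, so choosing the tolerances in the finitely many invocations of $P(j-1)$ small enough keeps the realized trajectory within $\varepsilon/2$ of the ideal one. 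The constraint $T' < \mathcal{T}'$ is met since each $P(j-1)$-leg runs in arbitrarily short time and each $\delta_k$ may be shrunk further; smoothness of the concatenated control is automatic because every piece (the $P(j-1)$-controls by induction, the zero control on the uncontrolled legs) lies in $C_c^\infty$ of its open subinterval. The main obstacle is precisely this last bookkeeping — getting the quantifier order right so that the amplification factors $e^{C\delta_k^{-1/2}}$ are harmless fixed constants — together with the conceptual point that the choice $r_2 = -\nu$ is exactly what turns the oscillating data of Proposition~\ref{prop1.2} into states the induction hypothesis can produce.
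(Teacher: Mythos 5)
Your proposal is correct and follows essentially the same route as the paper's proof: induction along the saturation sequence $\{H_j\}$, with the base case given by Proposition~\ref{prop1.2} at $\varphi=0$, the inductive step given by the three-leg conjugation manoeuvre (reach the oscillating state $e^{-(a+ib)\delta^{-1/2}\varphi_k}\chi$ via the induction hypothesis, run the free flow, undo the multiplier), and a final density/smoothing argument. The only difference is bookkeeping: the paper normalizes $(r_1,r_2)=\bigl(\tfrac{1}{1+\nu^2},\tfrac{-\nu}{1+\nu^2}\bigr)$ so that $a+ib=r_1+ir_2$ exactly and rescales the control to $(1,-\nu)$ at the very end, whereas you keep $(1,-\nu)$ throughout and absorb the resulting factor $1/\sqrt{1+\nu^2}$ into the exponent $\tilde\theta_k\in H_{j-1}$ --- these are trivially equivalent.
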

\begin{proof}
We first let $ r_1=a=\frac{1}{1+\nu^2},\, r_2 = b =\frac{-\nu}{1+\nu^2} $, and
use an induction argument in $N=0,1,2,...$ to show that the approximate controllability in this theorem is true for any $\theta \in H_N$. 
More precisely, we prove the following property:
\begin{description}\item[\hypertarget{PN}]{\textbf{(P$_N$)}}  
For any $\theta \in H_N$ and $\psi_0 \in H^s$, there is a family $\left\{u_\tau\right\}_{\tau>0} \subset L^2\left(J_1 ; \mathbb{R}^q\right)$ such that $u_\tau \in \Theta\left(\psi_0, \tau\right)$ for sufficiently small $\tau>0$, and
\begin{equation}\label{(2.3)}
\mathcal{R}_\tau\left(\psi_0, \left(u_\tau,\frac{1}{1+\nu^2},\frac{-\nu}{1+\nu^2}\right)\right) \rightarrow e^{\frac{1-i\nu}{1+\nu^2} \theta} \psi_0 \quad \text { in } H^s \text { as } \tau \rightarrow 0^{+}.
\end{equation}
\end{description} 
Combined with the saturation condition~{\hyperlink{H1}{\textbf{($\mathscr{P}$)}}}, this leads to the approximate controllability for any $\theta \in C^r\left(\mathbb{T}^d ; \mathbb{R}\right)$.
\\
\textbf{Step~1.} Case $N=0$. Applying Proposition~\ref{prop1.2} for $\varphi=0$ and $u \in \mathbb{R}^q$ with $\theta=$ $\langle u, Q\rangle$, we obtain
$$
\mathcal{R}_\delta\left(\psi_0, \left(\delta^{-1} u,\frac{1}{1+\nu^2},\frac{-\nu}{1+\nu^2}\right)\right) \rightarrow e^{\frac{1-i\nu}{1+\nu^2} \theta } \psi_0 \quad \text { in } H^s \text { as } \delta \rightarrow 0^{+}.
$$
This implies \eqref{(2.3)} with $\tau=\delta$ and $u_\tau=\delta^{-1} u$.
\\
\textbf{Step~2.} Case $N \geq 1$. We assume that \hyperlink{PN}{\textbf{(P$_{N-1}$)}} is true. Let $\tilde{\theta} \in H_N$ be of the form
$$
\tilde{\theta}=\theta_0+\sum_{j=1}^n \mathbb{B}\left(\theta_j\right),
$$
where $n \geq 1$ and $\theta_j \in H_{N-1},\, j=0, \ldots, n$.
Take $ \theta_1 $ and $ c>0 $ to be such that $ \tilde{\theta}_1=\theta_1 +c\geq0 $. Note that $ \mathbb{B}(\tilde{\theta}_1) = \mathbb{B}(\theta_1) $. Applying Proposition~\ref{prop1.2} with $\varphi=\tilde{\theta}_1$ and $u=0$, we get
\begin{equation*}\label{(2.4)}
e^{\frac{1-i\nu}{1+\nu^2}\delta^{-1 / 2} \tilde{\theta}_1} \mathcal{R}_\delta\left(e^{-\frac{1-i\nu}{1+\nu^2} \delta^{-1 / 2} \tilde{\theta}_1} \psi_0, \left(0,\frac{1}{1+\nu^2},\frac{-\nu}{1+\nu^2}\right)\right) \rightarrow e^{\frac{1-i\nu}{1+\nu^2} \mathbb{B}\left(\theta_1\right)} \psi_0 \quad \text { in } H^s \text { as } \delta \rightarrow 0^{+} .
\end{equation*}
Since $ c\in H_0$ and $\theta_1\in H_{N-1} $, we have $ \tilde{\theta}_1\in H_{N-1} $. 
The induction hypothesis implies that, for any $\delta>0$, there are families of controls $\left\{u_{\tau, \delta}^1\right\} \subset L^2\left(J_1 ; \mathbb{R}^q\right)$ and $\left\{u_{\tau, \delta}^2\right\} \subset L^2\left(J_1 ; \mathbb{R}^q\right)$ such that $u_{\tau, \delta}^1 \in \Theta\left(\psi_0, \tau\right)$ and $u_{\tau, \delta}^2 \in \Theta\left(\mathcal{R}_\delta\left(e^{-\frac{1-i\nu}{1+\nu^2} \delta^{-1 / 2} \tilde{\theta}_1} \psi_0, \left(0,\frac{1}{1+\nu^2},\frac{-\nu}{1+\nu^2}\right)\right), \tau\right)$ for sufficiently small $\tau>0$, and
\begin{equation*}\label{(2.5)}
\mathcal{R}_\tau\left(\psi_0, \left(u_{\tau, \delta}^1,\frac{1}{1+\nu^2},\frac{-\nu}{1+\nu^2}\right)\right) \rightarrow e^{-\frac{1-i\nu}{1+\nu^2} \delta^{-1 / 2} \tilde{\theta}_1} \psi_0,
\end{equation*}
\begin{align}
&\mathcal{R}_\tau\left(\mathcal{R}_\delta\left(e^{-\frac{1-i\nu}{1+\nu^2} \delta^{-1 / 2} \tilde{\theta}_1} \psi_0, \left(0,\frac{1}{1+\nu^2},\frac{-\nu}{1+\nu^2}\right)\right), \left(u_{\tau, \delta}^2,\frac{1}{1+\nu^2},\frac{-\nu}{1+\nu^2}\right)\right) \nonumber\\
 &\rightarrow e^{\frac{1-i\nu}{1+\nu^2} \delta^{-1 / 2} \tilde{\theta}_1} \mathcal{R}_\delta\left(e^{-\frac{1-i\nu}{1+\nu^2} \delta^{-1 / 2} \tilde{\theta}_1} \psi_0, \left(0,\frac{1}{1+\nu^2},\frac{-\nu}{1+\nu^2}\right)\right) \nonumber\label{(2.6)}
\end{align}
in $H^s$ as $\tau \rightarrow 0^{+}$. 

Combining these controls and using Proposition~\ref{prop1.1}, we can construct a new family $\left\{u_\tau^1\right\} \subset L^2\left(J_1 ; \mathbb{R}^q\right)$ such that $u_\tau^1 \in \Theta\left(\psi_0, \tau\right)$ for sufficiently small $\tau>0$, and
$$
\mathcal{R}_\tau\left(\psi_0, \left(u_\tau^1,\frac{1}{1+\nu^2},\frac{-\nu}{1+\nu^2}\right)\right) \rightarrow e^{\frac{1-i\nu}{1+\nu^2} \mathbb{B} (\theta_1 )} \psi_0 \quad \text { in } H^s \text { as } \tau \rightarrow 0^{+} .
$$
Iterating this argument with $\theta_j \in H_{N-1},\, j=0, \ldots, n$, we obtain a family $\left\{u_\tau^n\right\} \subset$ $L^2\left(J_1 ; \mathbb{R}^q\right)$ such that $u_\tau^n \in \Theta\left(\psi_0, \tau\right)$ for small $\tau>0$ and
$$
\mathcal{R}_\tau\left(\psi_0, \left(u_\tau^n,\frac{1}{1+\nu^2},\frac{-\nu}{1+\nu^2}\right)\right) \rightarrow e^{\frac{1-i\nu}{1+\nu^2}\left(\theta_0+\sum_{j=1}^n \mathbb{B} (\theta_j )\right)} \psi_0=e^{\frac{1-i\nu}{1+\nu^2} \tilde{\theta}} \psi_0 \quad \text { in } H^s \text { as } \tau \rightarrow 0^{+} .
$$
As $\tilde{\theta} \in H_N$ is arbitrary, this showes the required Property~\hyperlink{PN}{\textbf{(P$_{N}$)}} for $N$.
\\
\textbf{Step~3. Conclusion.}  Finally, let $\theta \in C^r\left(\mathbb{T}^d ; \mathbb{R}\right)$ be arbitrary. It is clear that $ (1+\nu^2)\theta\in C^r\left(\mathbb{T}^d ; \mathbb{R}\right) $. By the saturation hypothesis~{\hyperlink{H1}{\textbf{($\mathscr{P}$)}}}, $H_{\infty}$ is dense in $C^r\left(\mathbb{T}^d ; \mathbb{R}\right)$. This implies that we can find $N \geq 1$ and $\tilde{\theta} \in H_N$ such that
$$
\left\|e^{(1-i\nu)\theta} \psi_0-e^{\frac{1-i\nu}{1+\nu^2} \tilde{\theta}} \psi_0\right\|_s<\varepsilon.
$$
Applying \hyperlink{PN}{\textbf{(P$_{N}$)}}  for $\tilde{\theta} \in H_N$, we find $T \in(0, \mathcal{T})$ and $\tilde{u} \in \Theta\left(\psi_0, T\right)$ such that \begin{equation*}
	\left\|\mathcal{R}_T\left(\psi_0, \left(\tilde{u},\frac{1}{1+\nu^2},\frac{-\nu}{1+\nu^2}\right)\right)-e^{(1-i\nu) \theta} \psi_0\right\|_s<2\varepsilon.
\end{equation*}
Since
\begin{equation*}
\mathcal{R}_T\left(\psi_0, \left(\tilde{u},\frac{1}{1+\nu^2},\frac{-\nu}{1+\nu^2}\right)\right) = \mathcal{R}_T\left(\psi_0, \left(\frac{\tilde{u}}{1+\nu^2},1,-\nu\right)\right),
\end{equation*}
we obtain \eqref{(2.2)} with $ u= \frac{\tilde{u}}{1+\nu^2} $.

Proposition~\ref{prop1.1} and a density argument show that we can take $u \in \Theta\left(\psi_0, T\right) \cap C^{\infty}\left(J_T ; \mathbb{R}^q\right)$. Indeed, since $C^{\infty}\left(J_T ; \mathbb{R}^q\right)$ is dense in $L^2\left(J_T ; \mathbb{R}^q\right)$, for any $u\in  \Theta\left(\psi_0, T\right) $ such that  \eqref{(2.2)} is satisfied, one can choose $u_\varepsilon\in C^{\infty}\left(J_T ; \mathbb{R}^q\right)$ satisfying
\begin{equation*}
	\|u_\varepsilon - u\|_{L^2\left(J_T ; \mathbb{R}^q\right)}\leq \varepsilon,
\end{equation*}
this, along with \eqref{(2.2)} and  Proposition~\ref{prop1.1}, finally implies that $u_\varepsilon\in  \Theta\left(\psi_0, T\right) $ and
\begin{equation*}
	\begin{aligned}
	&\left\|\mathcal{R}_T\left(\psi_0, (u_\varepsilon,1,-\nu)\right)-e^{(1-i\nu) \theta} \psi_0\right\|_s \\
	\leq 	&\left\|\mathcal{R}_T\left(\psi_0, (u_\varepsilon,1,-\nu)\right)- \mathcal{R}_T\left(\psi_0, (u,1,-\nu)\right)\right\|_s + \left\|\mathcal{R}_T\left(\psi_0, (u,1,-\nu)\right)-e^{(1-i\nu) \theta} \psi_0\right\|_s\\
	< &( C+1 )\varepsilon.
	\end{aligned}
\end{equation*}
\end{proof}
\begin{remark}
Under the conditions of Theorem~\ref{thm2.2}, for any $M>0, \,\mathcal{T}>0$, and nonzero $\psi_0 \in H^s$, there exist a time $T \in(0, \mathcal{T})$ and a control $u \in \Theta\left(\psi_0, T\right)$ such that
$$
\left\|\mathcal{R}_T\left(\psi_0, \left(u,1,-\nu\right)\right)\right\|_s>M.
$$
It suffices to apply Theorem~\ref{thm2.2} by choosing $\theta \in C^r\left(\mathbb{T}^d ; \mathbb{R}\right)$ such that
$$
\left\|e^{(1-i\nu) \theta} \psi_0\right\|_s>M.
$$
To this aim, one can take arbitrarily $\theta_1 \in C^r\left(\mathbb{T}^d ; \mathbb{R}\right)$ satisfying $\left\|e^{(1-i\nu) \theta_1} \psi_0\right\|_1 \neq 0$, and put $\theta=\lambda \theta_1$ with sufficiently large $\lambda>0$, and use the inequality $\|\cdot\|_1 \leq\|\cdot\|_s$.
\end{remark}
\begin{remark}
Let $ \nu=0$ and the conditions of Theorem~\ref{thm2.2} be satisfied, then \eqref{(0.1)} is small-time global approximately controllable in $L^2$ between states that share the same argument, see Subsection~\ref{sec:reachable} for more details.
\end{remark}

\subsection{Proofs of Theorems~\ref{thm0} and \ref{thma}}
Let us end this section with an example of a saturating subspace. Let $I \subset \mathbb{Z}_*^d$ be a finite set and let
\begin{equation*}\label{(2.11)}
H(I)=\operatorname{span}\{\mathbf{1}, \sin \langle x, k\rangle, \cos \langle x, k\rangle\mid k \in I\} .
\end{equation*}
Recall that $I$ is a generator if any vector of $\mathbb{Z}^d$ is a linear combination of vectors of $I$ with integer coefficients. We write $m \perp l$ when the vectors $m,\, l \in \mathbb{R}^d$ are orthogonal and $m \not\perp l$ when they are not.
The following result is quoted from \cite[Proposition 2.6]{Duca_EMS}.
\begin{proposition}\label{prop2.6}
The subspace $H(I)$ is saturating in the sense of Definition~\ref{def2.1}, if and only if $I$ is a generator and for any $l,\, m \in I$, there are vectors $\left\{n_j\right\}_{j=1}^\sigma \subset I$ such that $l \not \perp n_1,\, n_j \not \perp n_{j+1}$ for $j=1, \ldots, \sigma-1$, and $n_\sigma \not \perp m$.
\end{proposition}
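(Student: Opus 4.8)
Proposition~\ref{prop2.6} is the frequency-theoretic characterization of \cite[Proposition~2.6]{Duca_EMS}, and I would follow that proof. The two implications are handled by complementary techniques. \emph{Sufficiency} rests on an explicit ``frequency-combination'' identity for the quadratic map $\mathbb{B}$ of \eqref{(1.2)}: one application of $F$ turns a pair of non-orthogonal frequencies into their sum, their difference, and their doubles; iterating this, together with the two hypotheses, produces \emph{all} of $\mathbb{Z}^d$ as frequencies, and trigonometric polynomials are dense in $C^r(\mathbb{T}^d;\mathbb{R})$. \emph{Necessity} rests on the complementary remark that if $I$ fails either hypothesis, the Fourier supports of \emph{all} elements of $H_\infty$ remain confined to a proper subset of $\mathbb{Z}^d$, so density is impossible.

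\textbf{Sufficiency: the combination lemma.} Since $\mathbb{B}(\varphi)=|\nabla\varphi|^2$, a direct computation using the product-to-sum trigonometric identities shows, for $l,m\in\mathbb{Z}_*^d$ and each of the eight choices of $\pm$ and of sine/cosine, that $\mathbb{B}(\sin\langle x,l\rangle\pm\cos\langle x,m\rangle)$, $\mathbb{B}(\sin\langle x,l\rangle\pm\sin\langle x,m\rangle)$ and $\mathbb{B}(\cos\langle x,l\rangle\pm\cos\langle x,m\rangle)$ each equal an element of $H(\{l,m\})+\operatorname{span}\{\mathbb{B}(\sin\langle x,l\rangle),\mathbb{B}(\cos\langle x,m\rangle)\}$ plus $\langle l,m\rangle$ times one of $\sin\langle x,l\pm m\rangle$, $\cos\langle x,l\pm m\rangle$. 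Because $F(H)$ is a genuine linear subspace containing $H$ and every $\mathbb{B}(\theta)$ with $\theta\in H$ --- which one checks using the polarization identity $4\langle\nabla\varphi,\nabla\psi\rangle=\mathbb{B}(\varphi+\psi)-\mathbb{B}(\varphi-\psi)$ and relations such as $-\mathbb{B}(\sin\langle x,l\rangle)=\mathbb{B}(\cos\langle x,l\rangle)-|l|^2\mathbf{1}$ (here $\mathbf{1}\in H(I)$) to see that $-\mathbb{B}(\theta)$ is again admissible --- one may subtract the ``diagonal'' pieces $\mathbb{B}(\sin\langle x,l\rangle)$ and $\mathbb{B}(\cos\langle x,m\rangle)$ and solve the resulting small linear system. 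This yields: if $\sin\langle\cdot,l\rangle,\cos\langle\cdot,l\rangle,\sin\langle\cdot,m\rangle,\cos\langle\cdot,m\rangle\in H_j$ and $\langle l,m\rangle\neq0$, then $\sin\langle\cdot,k\rangle,\cos\langle\cdot,k\rangle\in H_{j+1}$ for $k\in\{2l,2m,l+m,l-m\}$.

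\textbf{Sufficiency: iterating to all of $\mathbb{Z}^d$, and conclusion.} Let $\mathcal{J}=\{k\in\mathbb{Z}^d:\sin\langle\cdot,k\rangle\in H_j\text{ for some }j\}$. By the lemma, $\mathcal{J}\supseteq I$ and $\mathcal{J}$ is stable under $k\mapsto 2k$ and under $(k,k')\mapsto k\pm k'$ whenever $\langle k,k'\rangle\neq0$. Using the chain hypothesis one transports along non-orthogonal walks inside $I$ to show that the required $\mathbb{Z}$-combinations of elements of $I$ lie in $\mathcal{J}$; since $I$ is a generator, every $k\in\mathbb{Z}^d$ is such a combination, and a finite induction gives $\mathcal{J}=\mathbb{Z}^d$. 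Hence $H_\infty$ contains every real trigonometric polynomial, so it is dense in $C^r(\mathbb{T}^d;\mathbb{R})$ and $H(I)$ is saturating in the sense of Definition~\ref{def2.1}. For the converse, if $I$ is not a generator put $L=\operatorname{span}_{\mathbb{Z}}I\subsetneq\mathbb{Z}^d$: every element of $H(I)$ has Fourier support in $L$, and if $\varphi$ has support in $L$ so does $\mathbb{B}(\varphi)$ because $L$ is closed under addition; inductively $H_\infty$ has Fourier support in $L$ and cannot approximate $\cos\langle x,k_0\rangle$ for $k_0\notin L$. If instead the chain condition fails, the non-orthogonality graph on $I$ is disconnected; ``no edge between two components'' means exactly orthogonality between them, so $\mathbb{R}^d=\bigoplus_\alpha V_\alpha$ orthogonally with $V_\alpha=\operatorname{span}_{\mathbb{R}}I_\alpha$ over the $p\geq2$ components $I=\bigsqcup_\alpha I_\alpha$; the operations $k\mapsto2k$, $(k,k')\mapsto k\pm k'$ (with $\langle k,k'\rangle\neq0$) never mix distinct $V_\alpha$, so $\mathcal{J}\subseteq\bigcup_\alpha(V_\alpha\cap\mathbb{Z}^d)\subsetneq\mathbb{Z}^d$, and again $H_\infty$ is not dense.

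\textbf{Main obstacle.} The genuinely delicate points are, on the sufficiency side, the combinatorial step that converts ``generator $+$ chain condition'' into ``$\mathcal{J}=\mathbb{Z}^d$'' --- one must keep the non-orthogonal walks short enough and, crucially, keep nonzero the inner products that license each application of the combination lemma --- and, on the necessity side, the observation that a failure of the chain condition really does force an orthogonal splitting of the ambient space, since otherwise a direct walk $l$--$n$--$n'$--$m$ inside $I$ would reconnect $l$ and $m$. One must also verify at the outset that $F(H(I))$ is a bona fide linear subspace, since the ``subtract the diagonal terms and solve the linear system'' manipulation relies on this; all of this is carried out in detail in \cite{Duca_EMS}, whose argument I would reproduce.
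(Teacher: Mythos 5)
The first thing to note is that the paper does not prove Proposition~\ref{prop2.6} at all: it is quoted verbatim from \cite[Proposition~2.6]{Duca_EMS} and used as a black box, so there is no in-paper argument to compare yours against. That said, your sketch is a faithful reconstruction of the saturation argument in that reference (and in \cite{Nersesyan21,AS05}): the product-to-sum computation showing that $\mathbb{B}$ applied to $\sin\langle x,l\rangle\pm\cos\langle x,m\rangle$ and its companions produces the frequencies $l\pm m$ with coefficient $\langle l,m\rangle$ (whence the non-orthogonality requirement) together with the diagonal frequencies $2l$, $2m$ and a constant; the identity $\mathbb{B}(\sin\langle x,l\rangle)+\mathbb{B}(\cos\langle x,l\rangle)=|l|^2\mathbf{1}$ to generate negatives and make the ``largest subspace'' definition of $F(H)$ usable; the generator hypothesis to reach all of $\mathbb{Z}^d$; and, for necessity, confinement of Fourier supports to the sublattice $\operatorname{span}_{\mathbb{Z}}I$ or to the union of the orthogonal pieces. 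These are exactly the right ingredients.

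One caveat concerns your necessity argument for the chain condition. You treat that condition as equivalent to connectivity of the non-orthogonality graph on $I$, and argue ``condition fails $\Rightarrow$ graph disconnected $\Rightarrow$ orthogonal splitting''. But the condition as stated requires a walk from $l$ to $m$ with \emph{exactly} $\sigma$ intermediate vertices. Since every $n\in\mathbb{Z}_*^d$ satisfies $n\not\perp n$, walks can be padded with self-loops to become longer but cannot in general be shortened, so the stated condition is equivalent to the existence of a walk with \emph{at most} $\sigma$ intermediates --- strictly stronger than connectivity when the graph has diameter exceeding $\sigma+1$. Your dichotomy therefore does not cover the case of a connected graph with long diameter, and a self-contained proof of the equivalence as literally stated would have to address it (either by showing saturation fails there too, or by reducing to the formulation with walks of arbitrary length). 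For the application in the paper this is immaterial: for the set $K$ of \eqref{(0.3)} the vector $(1,\ldots,1)$ is non-orthogonal to every other element of $K$, so every pair is joined by a walk of length at most $2$, which can be padded to length $\sigma+1$ for any $\sigma\geq1$.
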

\begin{proof}[Proofs of Theorems~\ref{thm0} and \ref{thma}]
	Clearly, the set $K \subset \mathbb{Z}_*^d$ defined by \eqref{(0.3)} satisfies the condition in  Proposition~\ref{prop2.6}. Therefore, the subspace $H(K)$ is saturating, and Theorems~\ref{thm0}, \ref{thma}
	follow immediately from Theorems~\ref{thm00}, \ref{thm2.2}, respectively.
\end{proof}

\section{Concluding comments}\label{sec:conclud}
\subsection{Controllability on bounded domain}
It is interesting to notice that most controllability results are established on the torus, while only a few have been established for bounded domains. We refer to \cite[Section~6 and Section~7]{Duca_Burgers} for bilinear small-time controllability results of the Burgers equation with Dirichlet and Neumann boundary conditions. It would be interesting to extend these bilinear small-time controllability results to other evolution equations.
\subsection{Reachable subspace}\label{sec:reachable}
Let $ \nu=0$ and the conditions of Theorem~\ref{thm2.2} be satisfied, then \eqref{(0.1)} becomes a complex heat equation with complex nonlinearity. For any $\psi_0,\,\psi_1\in H^s$ with the same argument, i.e., $ Arg(\psi_0)=Arg(\psi_1)$, where $s \geq s_d$ is an integer, we can prove that: for any $\varepsilon>0,\, \mathcal{T}>0$, there are $T \in(0, \mathcal{T})$ and $u \in \Theta\left(\psi_0, T\right) \cap C^{\infty}\left(J_T ; \mathbb{R}^q\right)$ such that
\begin{equation}\label{eq:globalArg}
	\left\|\mathcal{R}_T\left(\psi_0, (u,1,0)\right)-  \psi_1\right\|_{L^2}<\varepsilon,
\end{equation}i.e., the small-time global approximately controllability is satisfied in $L^2$ between states that share the same argument.

Indeed, if we denote by $Z$ the set of zeroes of $\psi_0$ and $\psi_1$, then the fact that $Arg(\psi_0)=Arg(\psi_1)$ implies that $$\psi_0=|\psi_0|e^{i Arg(\psi_0)}=\frac{|\psi_0|}{|\psi_1|}|\psi_1|e^{iArg(\psi_1)}=\frac{|\psi_0|}{|\psi_1|}\psi_1 \text{ on } Z.$$
Consider for $\eta>0$ the set
\begin{equation*}
	Z_\eta:=\left\{x \in \mathbb{T}^d \mid \operatorname{dist}(x, Z)<\eta\right\},
\end{equation*}
and $Z_\eta^c:=\mathbb{T}^d\setminus Z_\eta$.   For $\eta>0$, we define
\begin{equation*}
	\phi_\eta=\rho_\eta \ln \left(\psi_1 / \psi_0\right)=\rho_\eta \ln \left(|\psi_1| / |\psi_0|\right),
\end{equation*}
where \begin{equation*}
	\rho_\eta=\left\{\begin{aligned}
		&1,\quad &&x\in Z_{2\eta}^c
		\\&(0,1),\quad && x\in Z_\eta^c\setminus Z_{2\eta}^c
		\\&0.\quad && x\in Z_\eta
	\end{aligned}\right.
\end{equation*}  is a mollifier compactly supported inside $Z_\eta^c$. $\phi_\eta$ is well-defined because $|\psi_1| / |\psi_0|>0$ on $Z_\eta^c$. Furthermore, $\phi_\eta$ belongs to $H^s\left(\mathbb{T}^d;\mathbb{R}\right)$. Notice that
\begin{equation*}
	\left\|e^{ \phi_\eta} \psi_0-\psi_1\right\|_{L^2\left(\mathbb{T}^d\right)}
	\leq\left\|e^{ \phi_\eta} \psi_0-\psi_1\right\|_{L^2\left(Z_{2\eta}\right)}.
\end{equation*}
Hence, for any $\varepsilon,\, T>0$, we can choose $\eta>0$ small enough such that
\begin{equation*}
	\left\|e^{\phi_\eta} \psi_0-\psi_1\right\|_{L^2\left(\mathbb{T}^d\right)}<\varepsilon / 3 .
\end{equation*}
Now, by the density of $C^{s+2}\left(\mathbb{T}^d;\R\right)$ into $H^s\left(\mathbb{T}^d;\R\right)$, there exists $\widetilde{\phi}_\eta \in C^{s+2}\left(\mathbb{T}^d;\R\right)$ such that
\begin{equation*}
	\left\|e^{\widetilde{\phi}_\eta} \psi_0-\psi_1\right\|_{L^2\left(\mathbb{T}^d\right)} \leq\left\|e^{\widetilde{\phi}_\eta} \psi_0-e^{\phi_\eta} \psi_0\right\|_{L^2\left(\mathbb{T}^d\right)}+\left\|e^{\phi_\eta} \psi_0-\psi_1\right\|_{L^2\left(\mathbb{T}^d\right)}<\frac{2 \varepsilon}{3} .
\end{equation*}
Then, by applying Theorem~\ref{thm2.2} with $\theta=\widetilde{\phi}_\eta$, there are $T \in(0, \mathcal{T})$ and $u \in \Theta\left(\psi_0, T\right) \cap C^{\infty}\left(J_T ; \mathbb{R}^q\right)$ such that
\begin{equation*} 
	\left\|\mathcal{R}_T\left(\psi_0, (u,1,0)\right)-e^{ \theta} \psi_0\right\|_s<\frac{\varepsilon}{3}.
\end{equation*}
The triangular inequality finally implies that
\begin{equation*}
	\left\|\mathcal{R}_T\left(\psi_0, (u,1,0)\right)-  \psi_1\right\|_{L^2\left(\mathbb{T}^d\right)} \leq \left\|\mathcal{R}_T\left(\psi_0, (u,1,0)\right)-e^{ \widetilde{\phi}_\eta} \psi_0\right\|_{L^2\left(\mathbb{T}^d\right)}+\left\|e^{\widetilde{\phi}_\eta} \psi_0-\psi_1\right\|_{L^2\left(\mathbb{T}^d\right)}<\varepsilon,
\end{equation*}this completes the proof of \eqref{eq:globalArg}.

However, the characteristic of the reachable subspace in $H^s$ of the bilinear CGL equation remains open. 

\newpage

\noindent\textbf{Appendix}
\begin{proof}[Proof of Proposition~\ref{prop1.1}]
	For $V\geq0$, $\nu,\, \mu,\, r_1,\, r_2\in \R,\, Q\in C^\infty( \mathbb{T}^d ; \mathbb{R}^q)$ and $ u\in L^2_{\text{loc}}(\R^+;\R^{q})$, let
	\begin{equation*}
		L \psi:=V \psi+(1+i \nu) \Delta \psi, \quad N(t,u,\psi):=-(1+i \mu)|\psi|^{2 \sigma} \psi +(r_1+ir_2)\langle u(t),Q\rangle\psi .
	\end{equation*}
	The associated semigroup $S(t)$ of $ L $ acting on a Banach space $  \mathbb{X}$ can be written as a convolution: $S(t) \psi=G_t * \psi,\,\forall\psi\in \mathbb{X}$, with its Green function $G_t=G_t(x)$ for $t>0$ given by (see \cite{LO1994})
	\begin{align}
		G_t(x) & =(2\pi)^d\sum_{n \in \mathbb{Z}^d} g_t(x+2\pi n), \label{(5.5)}\\
		g_t(x) & =\frac{1}{(4 \pi(1+i \nu) t)^{d / 2}} \exp \left(-\frac{|x|^2}{4(1+i \nu) t}+V t\right).\nonumber
	\end{align}
	For $t>0$, the Green function \eqref{(5.5)} satisfies the $L^1$-estimate
	\begin{equation*}
		\|G_t \|_{L^1} \leq \sum_{n \in \mathbb{Z}^d} \int_{[0,2\pi]^d} |g_t(x+2\pi n) | \d x= \int_{\mathbb{R}^d} |g_t(x) | \d x=  (1+\nu^2 )^{d / 4} e^{V t},
	\end{equation*}
	from which it follows that $S(t)$ is bounded over $L^p$ for every $1 \leq p \leq \infty$ with
	\begin{equation*}
		\|S(t) \psi\|_{L^p}=\left\|G_t * \psi\right\|_{L^p} \leq  \left\|G_t\right\|_{L^1}\|\psi\|_{L^p} \leq  \left(1+\nu^2\right)^{d / 4} e^{V t}\|\psi\|_{L^p}.
	\end{equation*}
	Moreover, it can be shown (see \cite{LO1994}) that $S(t)$ is a strongly continuous semigroup over $C^0$ and over $L^p$ for every $1 \leq p<\infty$.
	\\
	\textbf{Step~1. $S(t)$ is a strongly continuous semigroup over $ H^s$.}
	\\
	First of all, it is direct to check that $$ S(t)S(\tau)=S(t+\tau),\, \forall t,\tau\geq 0 \text{ and } S(0)=I. $$ 
	Next, for any $ \psi\in H^s $, we have by the Fourier series expansion,
	\begin{equation*}
		\psi(x)=\sum_{k \in \mathbb{Z}^d} \widehat{\psi}(k) e^{i\langle k,x\rangle},
	\end{equation*}where $$ \widehat{\psi}(k)= \frac{1}{(2 \pi)^d} \int_{[0,2\pi]^d} \psi(x) e^{-i \langle k , x\rangle} \d x. $$ Moreover $$ \widehat{S(t)\psi}(k)=\widehat{G_t * \psi}(k)=\widehat{G_t}(k)\cdot \widehat{\psi}(k) =  e^{-(1+i \nu)|k|^2 t+V t} \widehat{\psi}(k).\footnote{Recall that $(f*g)(x)=\int_{\mathbb{T}^d}f(x-y)g(y)\d \mathfrak{m}(y),\,\forall x\in\mathbb{T}^d$ and $ \widehat{G_t}(k)= \int_{\R^d}g_t(x) e^{-i \langle k, x\rangle} \d x= \frac{e^{V t}}{(4 \pi(1+i \nu) t)^{d / 2}} \int_{\R^d}  e^{-\frac{|x|^2}{4(1+i \nu) t}-i \langle k, x\rangle} \d x=  e^{-(1+i \nu)|k|^2 t+V t}$.}$$ Hence
	\begin{equation*}
		\|S(t) \psi-\psi\|_{s}^2= \sum_{k \in \mathbb{Z}^d}\left(1+|k|^2\right)^s |   e^{-(1+i \nu)|k|^2 t+V t}-1 |^2 |\widehat{\psi}(k) |^2 .
	\end{equation*}The fact that $ e^{-(1+i \nu)|k|^2 t+V t}\rightarrow 1 \text{ as } t\rightarrow 0^+  $, along with the dominated convergence theorem, implies that 
	\begin{equation*}
		\|S(t) \psi-\psi\|_{s}^2\rightarrow 0,\quad \text{as }t\rightarrow 0^+.
	\end{equation*}
	Finally, notice that for any $ t\geq 0 $ and for any $ \psi\in H^s $,
	\begin{equation*}
		\begin{aligned}
			\|S(t) \psi\|_{s}^2&=  \sum_{k \in \mathbb{Z}^d}\left(1+|k|^2\right)^s |e^{-(1+i \nu)|k|^2 t+V t} |^2 |\widehat{\psi}(k)|^2 \\
			&=  \sum_{k \in \mathbb{Z}^d}\left(1+|k|^2\right)^s e^{(-2|k|^2 +2V )t} |\widehat{\psi}(k)|^2 \\
			&\leq  e^{2Vt} \sum_{k \in \mathbb{Z}^d}\left(1+|k|^2\right)^s  |\widehat{\psi}(k) |^2 =    e^{2Vt} \|\psi\|_s^2,
		\end{aligned}
	\end{equation*} hence $ S(t)\in \mathcal{L}(H^s;H^s),\, \forall t\geq 0 $. In conclusion,   $S(t)$ is a strongly continuous semigroup over $H^s$.
	\\
	\textbf{Step~2. For any $s>d / 2,\, \tilde{\psi}_0 \in H^s$, and $\tilde{u} \in L_{\text{loc}}^2\left(\mathbb{R}_{+} ; \mathbb{R}^q\right)$, there exists $ t_1>0 $ such that the problem~\eqref{(0.1)}, \eqref{(0.2)} admits a unique mild solution $ \tilde{\psi}\in C\left([0,t_1];H^s\right) $ in the following form:
		\begin{equation}\label{(5.2)}
			\tilde{\psi}(t)= S(t)\tilde{\psi}_0 + \int_0^t S(t-\tau) N(\tau,\tilde{u},\tilde{\psi}(\tau))\d\tau.
	\end{equation}}
	\\
	The integral equation \eqref{(5.2)} recasts in terms of this Green function of form
	\begin{equation*}
		\tilde{\psi}(t)=G_t * \tilde{\psi}_0+\int_0^t G_{t-\tau} * N (\tau,\tilde{u},\tilde{\psi}\left(\tau\right) ) \d \tau.
	\end{equation*}
	Since $s>d / 2$, we deduce that the embedding $H^s \hookrightarrow C^0$ is continuous, namely, there exists a constant $C\left(s,d\right)>0$ such that
	\begin{equation*}
		\sup _{x \in \mathbb{T}^d}|y(x)| \leq C\left(s,d\right)\|y\|_{s}, \quad \forall y \in H^s.
	\end{equation*}
	Moreover, $H^s$ is a Banach algebra (see \cite[Theorem~4.39]{Adams}), i.e., there exists a constant $C\left(s,d\right)>0$ such that
	\begin{equation}\label{eq_BG}
		\|fg\|_s\leq C\left(s,d\right) \|f\|_s \|g\|_s,\quad \forall f, g\in H^s.
	\end{equation}
	For our later use, we define the following quantities
	\begin{equation*}
		\begin{aligned}
			&M:=\sup \left\{\left\|S(t)-I\right\|_{\mathcal{L}\left(H^s; H^s\right)}\mid0 \leq t \leq 1\right\}, \\ 
			&r (\tilde{\psi}_0 ):=2 M \|\tilde{\psi}_0 \|_{s}, \\
			&C(Q):=C(s,d)\max _{1 \leq i \leq q}\left\|Q_i\right\|_{s}, \\ 
			&C(q,r_1,r_2, \tilde{u}, Q):=(M+1) \sqrt{2q( r_1^2+r_2^2)}  C(Q) \|\tilde{u}\|_{L^2(0,1)},\\
			&\mathcal{T}_1 := C(q,r_1,r_2, \tilde{u}, Q)\left(r (\tilde{\psi}_0 )+ \|\tilde{\psi}_0 \|_{s}\right),\\
			&\mathcal{T}_2 := (M+1)(1+\mu^2)^{1/2}\left(C\left(s,d\right) 2\right)^{2\sigma}\left(r(\tilde{\psi}_0)^{2\sigma+1} + \|\tilde{\psi}_0\|_s^{2\sigma+1} \right).
		\end{aligned}
	\end{equation*}
	We now define
	\begin{equation*}
		\mathcal{T}:=\min \left\{1,\quad \frac{1}{4}\left(\frac{r (\tilde{\psi}_0 )}{\mathcal{T}_1+\mathcal{T}_2}\right)^2\right\},
	\end{equation*}
	and set $t_1=\mathcal{T}$.
	We denote $B:=B_{ C\left( [0, t_1 ]; H^s\right)}\left(\tilde{\psi}_0, r (\tilde{\psi}_0 )\right)$ the ball in the space $C\left( [0, t_1 ]; H^s\right)$ of center $\tilde{\psi}_0$ and radius $r (\tilde{\psi}_0 )$. For every $\psi \in B$ we define the following function
	\begin{equation*}
		\Phi(\psi)(t):=S(t) \tilde{\psi}_0+\int_0^t S(t-\tau)\left[-(1+i \mu)|\psi(\tau)|^{2 \sigma} \psi(\tau) +(r_1+ir_2)\langle \tilde{u}(\tau),Q\rangle\psi(\tau)\right] \d \tau.
	\end{equation*}We will finish \textbf{Step~2} by the following \textbf{Substep~2.1-- Substep~2.3}.
	\\
	\textbf{Substep~2.1. $\Phi$ maps $B$ into itself.}
	\\
	For any $\psi\in B$, we estimate
	\begin{equation*}
		\begin{aligned}
			&\left\|\Phi(\psi)(t)-\tilde{\psi}_0\right\|_{s} \\ 
			\leq & \left\|S(t) \tilde{\psi}_0-\tilde{\psi}_0\right\|_{s}+\left\|\int_0^t S(t-\tau)\left[-(1+i \mu)|\psi(\tau)|^{2 \sigma} \psi(\tau) +(r_1+ir_2)\langle \tilde{u}(\tau),Q\rangle\psi(\tau)\right] \d \tau\right\|_{s} \\
			\leq & M \|\tilde{\psi}_0 \|_{s}+(M+1)\int_0^t( r_1^2+r_2^2)^{1/2}\|\langle \tilde{u}(\tau), Q\rangle \psi(\tau)\|_{s}+(1+\mu^2)^{1/2}\left\||\psi(\tau)|^{2\sigma}\psi(\tau)\right\|_{s}\d\tau \\
			\leq & M \|\tilde{\psi}_0 \|_{s}+(M+1) ( r_1^2+r_2^2)^{1/2}C(Q) \int_0^t \sum_{i=1}^q\left|\tilde{u}_i(\tau)\right|\|\psi(\tau)\|_{s} \d \tau\\&+(M+1)(1+\mu^2)^{1/2}{C\left(s,d\right)}^{2\sigma}\int_0^t\|\psi(\tau)\|_{s}^{2\sigma+1} \d \tau,
		\end{aligned}
	\end{equation*}
	where we use \eqref{eq_BG} to obtain the third term in the last inequality. The Cauchy–Schwarz inequality, triangular inequality and the strict convexity of $f(x)=x^{2\sigma+1}, \forall x\geq0$, imply that 
	\begin{equation*}
		\begin{aligned}
			&\left\|\Phi(\psi)(t)-\tilde{\psi}_0\right\|_{s} \\ 
			\leq & M \|\tilde{\psi}_0 \|_{s}+(M+1) \sqrt{q} ( r_1^2+r_2^2)^{1/2}C(Q)\left(\int_0^t \sum_{i=1}^q\left|\tilde{u}_i(\tau)\right|^2 \d \tau\right)^{1 / 2}
			\left(2 \int_0^t \|\psi(\tau)-\tilde{\psi}_0 \|_{s}^2+ \|\tilde{\psi}_0 \|_{s}^2d \tau\right)^{1 / 2} \\
			& +(M+1)(1+\mu^2)^{1/2}{C\left(s,d\right) }^{2\sigma}2^{2\sigma} \int_0^t \|\psi(\tau)-\tilde{\psi}_0 \|_{s}^{2\sigma+1}+ \|\tilde{\psi}_0 \|_{s}^{2\sigma+1} \d \tau\\
			\leq& \frac{r(\tilde{\psi}_0)}{2} + (M+1) \sqrt{2q( r_1^2+r_2^2)}  C(Q) \|\tilde{u}\|_{L^2(0,1)}\left( \sup _{t \in [0,t_1]} \|\psi(t)-\tilde{\psi}_0 \|_{s} + \|\tilde{\psi}_0\|_s \right)\sqrt{t_1}\\
			&+(M+1)(1+\mu^2)^{1/2}{C\left(s,d\right) }^{2\sigma}2^{2\sigma}\left( \sup _{t \in [0,t_1]} \|\psi(t)-\tilde{\psi}_0 \|_{s}^{2\sigma+1} + \|\tilde{\psi}_0\|_s^{2\sigma+1} \right)t_1.
		\end{aligned}
	\end{equation*}
	Since $t_1\leq 1$, we obtain that $t_1\leq \sqrt{t_1}$ and 
	\begin{equation*}
		\begin{aligned}
			\left\|\Phi(\psi)(t)-\tilde{\psi}_0\right\|_{s} 
			\leq& \frac{r(\tilde{\psi}_0)}{2} + (M+1) \sqrt{2q( r_1^2+r_2^2)}  C(Q) \|\tilde{u}\|_{L^2(0,1)}\left( \sup _{t \in [0,t_1]} \|\psi(t)-\tilde{\psi}_0 \|_{s} + \|\tilde{\psi}_0\|_s \right)\sqrt{t_1}\\
			&+(M+1)(1+\mu^2)^{1/2}\left(C\left(s,d\right) 2\right)^{2\sigma}\left( \sup _{t \in [0,t_1]} \|\psi(t)-\tilde{\psi}_0 \|_{s}^{2\sigma+1} + \|\tilde{\psi}_0\|_s^{2\sigma+1} \right)\sqrt{t_1}\\
			\leq& \frac{r(\tilde{\psi}_0)}{2} + C(q,r_1,r_2, \tilde{u}, Q)\left( r(\tilde{\psi}_0) + \|\tilde{\psi}_0\|_s \right)\sqrt{t_1}\\
			&+(M+1)(1+\mu^2)^{1/2}\left(C\left(s,d\right) 2\right)^{2\sigma}\left(r(\tilde{\psi}_0)^{2\sigma+1} + \|\tilde{\psi}_0\|_s^{2\sigma+1} \right)\sqrt{t_1}\\
			\leq& \frac{r(\tilde{\psi}_0)}{2} + \frac{r(\tilde{\psi}_0)}{2}=r(\tilde{\psi}_0).
		\end{aligned}
	\end{equation*}
	Thus, we deduce that $ \Phi(\psi) \in B$.
	\\
	\textbf{Substep~2.2. $\Phi^n$ is a contraction over $B$ for $n$ large enough.}
	\\
	For any $\psi,\, \phi \in B$ and each $t\in (0,t_1)$,
	\begin{equation}\label{induction1}
		\begin{aligned}
			&\|\Phi(\psi)(t)-\Phi(\phi)(t)\|_{s}\\
			= & \left\|\int_0^t S(t-\tau)\left[-(1+i \mu)\left(|\psi(\tau)|^{2 \sigma} \psi(\tau)- |\phi(\tau)|^{2 \sigma} \phi(\tau)\right) +(r_1+ir_2)\langle \tilde{u}(\tau),Q\rangle\left(\psi(\tau)-\phi(\tau)\right)\right] \d \tau\right\|_{s} \\
			\leq & (M+1)(r_1^2+r_2^2)^{1/2}\int_0^t\|\langle \tilde{u}(\tau), Q\rangle(\psi(\tau)-\phi(\tau))\|_{s} \d \tau +(M+1)(1+\mu^2)^{1/2}C\left(s,d\right) \\
			&\times \int_0^t\|\psi(\tau)-\phi(\tau)\|_{s} \sum_{j=0}^{2\sigma}\|\psi(\tau)\|_{s}^j\|\phi(\tau)\|_{s}^{2\sigma-j} \d \tau \\
			\leq &(M+1) (r_1^2+r_2^2)^{1/2}C(Q) \sqrt{ q}\|\tilde{u}\|_{L^2(0,1)}\left(\int_0^t\|\psi(\tau)-\phi(\tau)\|_{s}^2 \d \tau\right)^{1 / 2} +(M+1)(1+\mu^2)^{1/2}C\left(s,d\right) 2^{2\sigma-2}\\
			& \times \int_0^t\|\psi(\tau)-\phi(\tau)\|_{s} D(\psi(\tau), \phi(\tau))\d \tau,
		\end{aligned}
	\end{equation}
	where
	\begin{equation*}
		D(\psi(\tau), \phi(\tau)):=\sum_{j=0}^{2\sigma}\left( \|\psi(\tau)-\tilde{\psi}_0 \|_{s}^j+ \|\tilde{\psi}_0 \|_{s}^j\right)\left( \|\phi(\tau)-\tilde{\psi}_0 \|_{s}^{2\sigma-j}+ \|\tilde{\psi}_0 \|_{s}^{2\sigma-j}\right).
	\end{equation*}
	Therefore, for every $0\leq t<\tau\leq t_1$, we get that 
	\begin{equation}\label{induction2}
		\begin{aligned}
			&\sup _{0 \leq t \leq \tau}\|\Phi(\psi)(t)-\Phi(\phi)(t)\|_{s}\\ \leq &(M+1) (r_1^2+r_2^2)^{1/2}C(Q) \sqrt{ q}\|\tilde{u}\|_{L^2(0,1)} \sqrt{\tau} \sup _{0 \leq t \leq \tau}\|\psi(t)-\phi(t)\|_{s} \\
			& +(M+1)(1+\mu^2)^{1/2}C\left(s,d\right) 2^{2\sigma-2} \sqrt{\tau} \sup _{0 \leq t \leq t_1} D(\psi(t), \phi(t)) \sup _{0 \leq t \leq \tau}\|\psi(t)-\phi(t)\|_{s} \\
			\leq & \left(C(q,r_1,r_2, \tilde{u}, Q)+(M+1)(1+\mu^2)^{1/2}C\left(s,d\right) 2^{2\sigma-2}\tilde{D} (\tilde{\psi}_0 )\right) \sqrt{\tau} \sup _{0 \leq t \leq \tau}\|\psi(t)-\phi(t)\|_{s},
		\end{aligned}
	\end{equation}
	where
	\begin{equation*}
		\tilde{D} (\tilde{\psi}_0 ):=\sum_{j=0}^{2\sigma}\left(r (\tilde{\psi}_0 )^j+ \|\tilde{\psi}_0 \|_{s}^j\right)\left(r (\tilde{\psi}_0 )^{2\sigma-j}+ \|\tilde{\psi}_0 \|_{s}^{2\sigma-j}\right).
	\end{equation*}
	Using \eqref{induction1} and \eqref{induction2}, and by induction on $ n $, we obtain
	\begin{equation*}
		\begin{aligned}
			\sup _{0 \leq t \leq t_1}\left\|\Phi^n(\psi)(t)-\Phi^n(\phi)(t)\right\|_{s} \leq&\left(C(q,r_1,r_2, \tilde{u}, Q)+(M+1)(1+\mu^2)^{1/2}C\left(s,d\right) 2^{2\sigma-2}\tilde{D} (\tilde{\psi}_0 )\right)^n \frac{\left(\sqrt{t_1}\right)^n}{\sqrt{n!}} \\ 
			&\times\sup _{0 \leq t \leq t_1}\|\psi(t)-\phi(t)\|_{s}.
		\end{aligned}
	\end{equation*}
	For $ n $ large enough, it holds that 
	\begin{equation*}
		\left(C(q,r_1,r_2, \tilde{u}, Q)+(M+1)(1+\mu^2)^{1/2}C\left(s,d\right) 2^{2\sigma-2}\tilde{D} (\tilde{\psi}_0 )\right)^n \frac{\left(\sqrt{t_1}\right)^n}{\sqrt{n!}}<1.
	\end{equation*}
	\\
	\textbf{Substep~2.3. The existence and extension of a solution.}
	\\
	By the contraction principle (see \cite[Theorem~5.7]{Brezis}), we deduce that $ \Phi $ has a unique fixed point $ \tilde{\psi}\in B $, which is the mild solution of problem~\eqref{(0.1)}, \eqref{(0.2)} in the form \eqref{(5.2)}.
	Furthermore, it holds that
	\begin{equation*}
		\sup _{t \in [0, t_1 ]}\|\tilde{\psi}(t)\|_{s} \leq(2 M+1) \|\tilde{\psi}_0 \|_{s}.
	\end{equation*}
	Therefore, we can conclude that, if $\tilde{\psi}$ is a mild solution of problem~\eqref{(0.1)}, \eqref{(0.2)} on the interval $[0, \tau]$, it can be extended to the interval $[0, \tau+\delta(\tau)]$ with $ \delta(\tau)>0 $. 
	In fact, by defining the quantities
	\begin{equation*}
		\begin{aligned}
			&M(\tau):=\sup \left\{\left\|S(t)-I\right\|_{\mathcal{L}\left(H^s; H^s\right)}\mid \tau \leq t \leq \tau+1\right\}, \\ &r (\tilde{\psi}(\tau)):=2 M(\tau) \|\tilde{\psi}(\tau) \|_{s} \\
			&C(\tau,q,r_1,r_2, \tilde{u}, Q):=(M(\tau)+1) \sqrt{2q( r_1^2+r_2^2)}  C(Q) \|\tilde{u}\|_{L^2(\tau,\tau+1)},\\
			&\mathcal{T}_3:= C(\tau,q,r_1,r_2, \tilde{u}, Q)\left(r (\tilde{\psi}(\tau) )+ \|\tilde{\psi}(\tau) \|_{s}\right),\\
			&	\mathcal{T}_4:=(M(\tau)+1)(1+\mu^2)^{1/2}\left(C\left(s,d\right) 2\right)^{2\sigma}\left(r(\tilde{\psi}(\tau))^{2\sigma+1} + \|\tilde{\psi}(\tau)\|_s^{2\sigma+1} \right),
		\end{aligned}
	\end{equation*}
	and 
	\begin{equation*}
		\mathcal{T}(\tau):=\min\left\{1,\quad\frac14\left(\frac{r (\tilde{\psi}(\tau) )}{\mathcal{T}_3+\mathcal{T}_4}\right)^2\right\},
	\end{equation*}
	we can define on $[\tau, \tau+\mathcal{T}(\tau)], \tilde{\psi}(t)=w(t)$ where $w(t)$ is the solution of the integral equation
	\begin{equation*}
		w(t)=S(t-\tau) \tilde{\psi}(\tau)+\int_\tau^t S(t-s) \left[-(1+i \mu)|w(s)|^{2 \sigma} w(s) +(r_1+ir_2)\langle \tilde{u}(s),Q\rangle w(s)\right] \d s,\,  \tau \leq t \leq \tau+\mathcal{T}(\tau).
	\end{equation*}
	
	Let $ [0,\tilde{T}) $ be the maximal existence interval of the mild solution $ \tilde{\psi} $ of problem~\eqref{(0.1)}, \eqref{(0.2)}, where $ \tilde{T}=\tilde{T}\left(\tilde{\psi}_0,\tilde{u}\right)>0 $. If $ \tilde{T}<+\infty $, then $ \|\tilde{\psi}(t)\|_s\rightarrow+\infty $ as
	$ t\rightarrow \tilde{T}^- $, otherwise $ \tilde{\psi} $ could be extended, which contradicts the maximality of $ \tilde{T} $. Moreover
	for any $0<T<\tilde{T}\left(\tilde{\psi}_0,\tilde{u}\right)$, we have
	\begin{equation}\label{bounded}
		\sup _{t \in[0, T]}\|\tilde{\psi}(t)\|_{s} \leq C(T) \|\tilde{\psi}_0 \|_{s}.
	\end{equation}
	\\
	\textbf{Step~3. Proof of the stability \eqref{stability} and the continuity \eqref{continuity}. }
	\\
	We first show \eqref{continuity}. Let $\psi,\, \tilde{\psi} \in C\left([0, T], H^s\right)$, with $0 \leq T < \min \left\{\tilde{T}\left(\psi_0,u\right),\, \tilde{T}\left(\tilde{\psi}_0,\tilde{u}\right)\right\}$, be the solutions of problem~\eqref{(0.1)}, \eqref{(0.2)} corresponding to the initial conditions $\psi_0$ and $\tilde{\psi}_0$ and controls $u$ and $\tilde{u}$, respectively. Then,
	\begin{equation*}
		\begin{aligned}
			\|\psi(t)-\tilde{\psi}(t)\|_{s} \leq & e^{Vt} \| \psi_0-\tilde{\psi}_0 \|_{s} + (1+ \mu^2)^{1/2}\int_0^t e^{V(t-\tau)}\left\||\psi(\tau)|^{2 \sigma} \psi(\tau)-|\tilde{\psi}(\tau)|^{2 \sigma} \tilde{\psi}(\tau) \right\|_s\d\tau \\
			&+(r_1^2+r_2^2)^{1/2}\int_0^te^{V(t-\tau)}\left\|\langle u(\tau),Q\rangle\psi(\tau)-\langle \tilde{u}(\tau),Q\rangle\tilde{\psi}(\tau)\right\|_s\d\tau\\
			\leq & e^{Vt} \| \psi_0-\tilde{\psi}_0 \|_{s} + (1+ \mu^2)^{1/2} e^{Vt}\int_0^t\left\||\psi(\tau)|^{2 \sigma} \psi(\tau)-|\tilde{\psi}(\tau)|^{2 \sigma} \tilde{\psi}(\tau) \right\|_s\d\tau \\
			&+(r_1^2+r_2^2)^{1/2}e^{Vt}\int_0^t\left\|\langle u(\tau),Q\rangle(\psi(\tau)-\tilde{\psi}(\tau)) \right\|_s\d\tau\\
			&+(r_1^2+r_2^2)^{1/2}e^{Vt}\int_0^t\left\|\langle u(\tau)-\tilde{u}(\tau),Q\rangle\tilde{\psi}(\tau)\right\|_s\d\tau.
		\end{aligned}
	\end{equation*}
	Using the inequality $\left\||\psi |^{2 \sigma} \psi -|\tilde{\psi} |^{2 \sigma} \tilde{\psi}  \right\|_s\leq C(s,d) \|\psi -\tilde{\psi} \|_s \sum_{j=0}^{2\sigma}\|\psi \|_s^j\|\tilde{\psi} \|_s^{2\sigma-j},\, \forall \psi,\tilde{\psi}\in H^s$, we obtain that
	\begin{equation*}
		\begin{aligned}
			&\|\psi(t)-\tilde{\psi}(t)\|_{s}  \\
			\leq & e^{Vt} \| \psi_0-\tilde{\psi}_0 \|_{s} + (1+ \mu^2)^{1/2} e^{Vt}C\left(s,d\right)\int_0^t\|\psi(\tau)-\tilde{\psi}(\tau)\|_s \sum_{j=0}^{2\sigma}\|\psi(\tau)\|_s^j\|\tilde{\psi}(\tau)\|_s^{2\sigma-j}\d\tau \\
			&+(r_1^2+r_2^2)^{1/2}e^{Vt}\int_0^t\left\|\langle u(\tau),Q\rangle(\psi(\tau)-\tilde{\psi}(\tau)) \right\|_s\d\tau\\
			&+(r_1^2+r_2^2)^{1/2}e^{Vt}\int_0^t\left\|\langle u(\tau)-\tilde{u}(\tau),Q\rangle\tilde{\psi}(\tau)\right\|_s\d\tau\\
			\leq & e^{Vt} \| \psi_0-\tilde{\psi}_0 \|_{s} + (1+ \mu^2)^{1/2} e^{Vt}C\left(s,d\right)\sqrt{t}\sup _{0 \leq \tau \leq t}\|\psi(\tau)-\tilde{\psi}(\tau)\|_{s}\\
			&\;\;\times\left(\sum_{j=0}^{2\sigma} \sup _{0 \leq \tau \leq t}\|\psi(\tau)\|_{s}^j \sup _{0 \leq \tau \leq t}\|\tilde{\psi}(\tau)\|_{s}^{2\sigma-j}\right) \\
			&+(r_1^2+r_2^2)^{1/2}e^{Vt}C(Q)\sqrt{2q}\|u\|_{L^2(0,t)}\sqrt{t}\sup _{0 \leq \tau \leq t}\|\psi(\tau)-\tilde{\psi}(\tau)\|_{s}\\
			&+(r_1^2+r_2^2)^{1/2}e^{Vt}C(Q)\sqrt{2q}\|u-\tilde{u}\|_{L^2(0,t)}\sqrt{t}\sup _{0 \leq \tau \leq t}\| \tilde{\psi}(\tau)\|_{s} .
		\end{aligned}
	\end{equation*}
	Hence for any $t_2\in(0,T)$, we have
	\begin{equation*}
		\begin{aligned}
			&\sup _{0 \leq t \leq t_2}\|\psi(t)-\tilde{\psi}(t)\|_{s} \\ \leq & e^{VT}  \|\psi_0-\tilde{\psi}_0 \|_{s}+\left((r_1^2+r_2^2)^{1/2}e^{VT}C(Q)\sqrt{2q}\sqrt{T}\sup _{0 \leq \tau \leq T}\| \tilde{\psi}(\tau)\|_{s}\right)\|u-\tilde{u}\|_{L^2(0,T)}  \\
			& + (1+ \mu^2)^{1/2} e^{VT}C\left(s,d\right) \left(\sum_{j=0}^{2\sigma} \sup _{0 \leq \tau \leq T}\|\psi(\tau)\|_{s}^j \sup _{0 \leq \tau \leq T}\|\tilde{\psi}(\tau)\|_{s}^{2\sigma-j}\right)\sqrt{t_2} \sup _{0 \leq t \leq t_2}\|\psi(t)-\tilde{\psi}(t)\|_{s} \\
			&+ (r_1^2+r_2^2)^{1/2}e^{VT}C(Q)\sqrt{2q}\|u\|_{L^2(0,T)}\sqrt{t_2}\sup _{0 \leq t \leq t_2}\|\psi(t)-\tilde{\psi}(t)\|_{s},
		\end{aligned}
	\end{equation*}which, along with \eqref{bounded}, implies that
	\begin{equation*}
		\begin{aligned}
			&\sup _{0 \leq t \leq t_2}\|\psi(t)-\tilde{\psi}(t)\|_{s} \\ 
			\leq & e^{VT}  \|\psi_0-\tilde{\psi}_0 \|_{s}+\left((r_1^2+r_2^2)^{1/2}C(Q)\sqrt{2q}\Lambda\right)e^{VT}\sqrt{T}\|u-\tilde{u}\|_{L^2(0,T)}  \\
			& + (1+ \mu^2)^{1/2} C\left(s,d\right) \left(\sum_{j=0}^{2\sigma} C(T)^j\left(\delta+\|\tilde{\psi}_0\|_s\right)^j \Lambda^{2\sigma-j}\right)e^{VT} \sqrt{t_2} \sup _{0 \leq t \leq t_2}\|\psi(t)-\tilde{\psi}(t)\|_{s} \\
			&+ (r_1^2+r_2^2)^{1/2}C(Q)\sqrt{2q}\left(\delta+\Lambda\right)e^{VT}\sqrt{t_2}\sup _{0 \leq t \leq t_2}\|\psi(t)-\tilde{\psi}(t)\|_{s}.
		\end{aligned}
	\end{equation*} Choosing $ t_2 $ to be such that
	\begin{equation*}
		(1+ \mu^2)^{1/2} C\left(s,d\right) \left(\sum_{j=0}^{2\sigma} C(T)^j\left(\delta+\|\tilde{\psi}_0\|_s\right)^j \Lambda^{2\sigma-j}\right)e^{VT} \sqrt{t_2}+ (r_1^2+r_2^2)^{1/2}C(Q)\sqrt{2q}\left(\delta+\Lambda\right)e^{VT}\sqrt{t_2}=\frac12,
	\end{equation*}
	we obtain
	\begin{equation}\label{eq_continuity_in_(0,t_1)}
		\sup _{0 \leq t \leq t_2}\|\psi(t)-\tilde{\psi}(t)\|_{s} \leq C(T,\Lambda) \left(\|\psi_0-\tilde{\psi}_0 \|_{s}+ \|u-\tilde{u}\|_{L^2(0,T)}\right),
	\end{equation}where
	\begin{equation*}
		C(T,\Lambda):=2  e^{VT} + 2\left((r_1^2+r_2^2)^{1/2}C(Q)\sqrt{2q}\Lambda\right)e^{VT}\sqrt{T}.
	\end{equation*}
	Similarly, we can always obtain another, maybe much bigger positive constant, and still denote it by $C(T,\Lambda)>0$, such that
	\begin{equation*}
		\sup _{jt_2 \leq t \leq jt_2+t_2}\|\psi(t)-\tilde{\psi}(t)\|_{s} \leq  C(T,\Lambda)\left(\|\psi_0-\tilde{\psi}_0 \|_{s}+\|u-\tilde{u}\|_{L^2(0,T)}\right), \quad \forall j=1,...,[T/t_2]-1,
	\end{equation*}
	\begin{equation*}
		\sup _{[T/t_2]t_2 \leq t \leq T}\|\psi(t)-\tilde{\psi}(t)\|_{s} \leq  C(T,\Lambda)\left(\|\psi_0-\tilde{\psi}_0 \|_{s}+\|u-\tilde{u}\|_{L^2(0,T)}\right),
	\end{equation*}which, along with \eqref{eq_continuity_in_(0,t_1)}, implies \eqref{continuity}.
	Finally, the same technique also implies the validity of \eqref{stability}.
\end{proof}


\begin{thebibliography}{00}
\bibitem{Adams}
R. Adams and J. Fournier, Sobolev Spaces, second edition,  Pure and Applied Mathematics (Amsterdam), 140, Elsevier/Academic Press, Amsterdam, 2003.
\bibitem{AS05}
A. Agrachev and A. Sarychev, Navier-Stokes equations: controllability by means of low modes forcing, J. Math. Fluid Mech. 7 (2005), no.~1, 108--152.
\bibitem{AS06}
A. Agrachev and A. Sarychev, Controllability of 2D Euler and Navier-Stokes equations by degenerate forcing, Comm. Math. Phys. 265 (2006), no.~3, 673--697.
\bibitem{AS08}
A. Agrachev and A. Sarychev, Solid controllability in fluid dynamics, in  Instability in models connected with fluid flows. I, 1--35, Int. Math. Ser. (N. Y.), 6, Springer, New York.
\bibitem{KB_EP_2}
K. Beauchard and E. Pozzoli, Small-time approximate controllability of bilinear Schr\"odinger equations and diffeomorphisms, Ann. Inst. H. Poincaré C Anal. Non Linéaire (2025), published online first.
\bibitem{Brezis}
H. Brezis, Functional Analysis, Sobolev Spaces and Partial Differential Equations, Universitext, Springer, New York, 2011.
\bibitem{CP_Auto}
T. Chambrion and E. Pozzoli, Small-time bilinear control of Schr\"odinger equations with application to rotating linear molecules, Automatica J. IFAC 153 (2023), Paper No. 111028, 7 pp.
\bibitem{CoronXZ_JDE}
J.-M. Coron, S. Xiang and P. Zhang, On the global approximate controllability in small time of semiclassical 1-D Schr\"odinger equations between two states with positive quantum densities, J. Differential Equations 345 (2023), 1--44.
\bibitem{DFLZ_23}
F.-F. Dou, X. Fu, Z. Liao, and X. Zhu, Global Carleman estimate and state observation problem for Ginzburg-Landau equation, SIAM J. Control Optim. 61 (2023), no.~5, 2981--2996.
\bibitem{DucaNersesyan_SICON}
A. Duca and V. Nersesyan, Local exact controllability of the one-dimensional nonlinear Schr\"odinger equation in the case of Dirichlet boundary conditions, SIAM J. Control Optim. 63 (2025), no.~1, S20--S36.
\bibitem{Duca_EMS}
A. Duca and V. Nersesyan, Bilinear control and growth of Sobolev norms for the nonlinear Schr\"odinger equation, J. Eur. Math. Soc. (JEMS) 27 (2025), no.~6, 2603--2622. 
\bibitem{Duca_SICON}
A. Duca and E. Pozzoli, Small-time controllability for the nonlinear Schr\"odinger equation on $\R^N$ via bilinear electromagnetic fields, SIAM J. Control Optim. 63 (2025), no.~1, S37--S52.
\bibitem{Duca_JMPA}
A. Duca, E. Pozzoli, and C. Urbani, On the small-time bilinear control of a nonlinear heat equation: global approximate controllability and exact controllability to trajectories, J. Math. Pures Appl. (9) 203 (2025), Paper No. 103758, 41 pp.
\bibitem{Duca_Burgers}
A. Duca, and T. Takahashi, Small-time global controllability of the Burgers equation via bilinear controls. 2025. hal-04906284.
\bibitem{LuisCPAM2003}
L. Escauriaza, G.  Seregin and V. \v Sver\'ak, Backward uniqueness for parabolic equations, Arch. Ration. Mech. Anal. 169 (2003), no.~2, 147--157.
\bibitem{Fu_07}
X. Fu, Null controllability for the parabolic equation with a complex principal part, J. Funct. Anal. 257 (2009), no.~5, 1333--1354.
\bibitem{FuLiu_17}
X. Fu and X. Liu, Controllability and observability of some stochastic complex Ginzburg-Landau equations, SIAM J. Control Optim. 55 (2017), no.~2, 1102--1127.
\bibitem{GL}
V. Ginzburg and L. Landau, On the theory of superconductivity, Zh. Eksp. Teor. Fiz., 20 (1950), pp. 1064–1082 (in Russian); L. Landau, Collected Papers, Pergamon Press, Oxford, 1965, pp. 138–167 (in English). 
\bibitem{GuoJiangLi}
B. Guo, M. Jiang, and Y. Li, Ginzburg-Landau Equations, Science Press Beijing, 2020.
\bibitem{LO1994}
C. Levermore and M. Oliver, The complex Ginzburg-Landau equation as a model problem, in {\it Dynamical systems and probabilistic methods in partial differential equations (Berkeley, CA, 1994)}, 141--190, Lectures in Appl. Math., 31, Amer. Math. Soc., Providence, RI. 
\bibitem{Nersesyan21}
V. Nersesyan, Approximate controllability of nonlinear parabolic PDEs in arbitrary space dimension, Math. Control Relat. Fields 11 (2021), no.~2, 237--251.
\bibitem{CPAM2025}
V. Nersesyan and M. Rissel, Localized and degenerate controls for the incompressible Navier-Stokes system, Comm. Pure Appl. Math. 78 (2025), no.~7, 1285--1319.
\bibitem{NewellWhitehead}
A. Newell and J. Whitehead, Finite bandwidth, finite amplitude convection, J. Fluid Mech. 38 (1969), no.~2, 279--303.
\bibitem{Pozzoli_JDE}
E. Pozzoli, Small-time global approximate controllability of bilinear wave equations, J. Differential Equations 388 (2024), 421--438.
\bibitem{JMPA2024}
M. Rissel and Y.-G. Wang, Small-time global approximate controllability for incompressible MHD with coupled Navier slip boundary conditions, J. Math. Pures Appl. (9) 190 (2024), Paper No. 103601, 66 pp.
\bibitem{RZ_09}
L. Rosier and B.-Y. Zhang, Null controllability of the complex Ginzburg-Landau equation, Ann. Inst. H. Poincar\'e{} C Anal. Non Lin\'eaire 26 (2009), no.~2, 649--673.
\bibitem{Sarychev12}
A. Sarychev, Controllability of the cubic Schr\"odinger equation via a low-dimensional source term, Math. Control Relat. Fields 2 (2012), no.~3, 247--270.
\bibitem{Segel}
L. Segel, Distant side-walls cause slow amplitude modulations of cellular convection, J. Fluid Mech., 38 (1969),  203--224. 
\bibitem{Shirikyan06}
A. Shirikyan, Approximate controllability of three-dimensional Navier-Stokes equations, Comm. Math. Phys. 266 (2006), no.~1, 123--151.
\bibitem{Shirikyan18}
A. Shirikyan, Control theory for the Burgers equation: Agrachev-Sarychev approach, Pure Appl. Funct. Anal. 3 (2018), no.~1, 219--240.
\bibitem{SteStu}
K. Stewartson and J. Stuart, A non-linear instability theory for a wave system in plane Poiseuille flow, J. Fluid Mech. 48 (1971), 529--545.
\end{thebibliography}
\end{document}